\newcommand{\comm}[1]{}
\newcommand{\Z}{\mathbb{Z}}
\newcommand{\e}{\varepsilon}
\newcommand{\X}{X^{\ast}}
\newcommand{\LR}{\xleftrightarrow{\phi}}
\newcommand{\Pos}{\mathrm{Pos}}
\newcommand{\Neg}{\mathrm{Neg}}
\newcommand{\Mod}[1]{\ (\mathrm{mod}\ #1)}
\title{Twisted conjugacy in dihedral Artin groups I: Torus Knot Groups}
\author[Gemma Crowe]{Gemma Crowe}
\address{Department of Mathematics, University of Manchester M13 9PL, UK and the Heilbronn Institute for Mathematical Research, Bristol, UK}
\email{gemma.crowe@manchester.ac.uk}
\thanks{Keywords: Twisted conjugacy problem, dihedral Artin groups, orbit decidability, torus knot groups. \\
2020 \textit{Mathematics Subject Classification.} 20F10, 20F36}
\date{}
\begin{document}

\begin{abstract}
In this paper we provide an alternative solution to a result by (Juh\'{a}sz 2011), that the twisted conjugacy problem for odd dihedral Artin groups is solvable, where an odd dihedral Artin group is a group with presentation $G(m) = \langle a,b \mid {}_{m}(a,b) =  {}_{m}(b,a) \rangle$, where $m\geq 3$ is odd, and $_{m}(a,b)$ is the word $abab \dots$ of length $m$. Our solution provides an implementable linear time algorithm, by considering an alternative group presentation to that of a torus knot group, and working with geodesic normal forms. An application of this result is that the conjugacy problem is solvable in extensions of odd dihedral Artin groups.
\end{abstract}
\maketitle

\section{Introduction}
Over a century ago, Dehn defined his famous decision problems, which have been studied across a wide range of groups. One of these is the \emph{conjugacy problem}, which asks if there exists an algorithm to determine if two elements in a group, given by words on the generators, are conjugate. This decision problem has been generalised to include an algebraic property known as \emph{twisted conjugacy}. 

For a finitely generated group $G$ with inverse-closed generating set $X$, we say two elements $u,v \in G$ are \emph{twisted conjugate}, by some automorphism $\phi \in \mathrm{Aut}(G)$, if there exists an element $w \in G$ such that $v = \phi^{-1}(w)uw$. The \emph{twisted conjugacy problem} (TCP) asks whether there exists an algorithm to determine if two elements, given as words over $X$, are twisted conjugate by some automorphism $\phi \in \mathrm{Aut}(G)$. Whilst the twisted conjugacy problem is a relatively new decision problem, in comparison to the conjugacy problem, several important results have been established (see \cite{bogopolski_conjugacy_2006, burillo_conjugacy_2016, cox_twisted_2017, gonzalez-meneses_twisted_2014}). Note that a positive solution to the twisted conjugacy problem implies a positive solution to the conjugacy problem, however the converse is not necessarily true \cite[Corollary 4.9]{bogopolski_orbit_2009}. 

In a series of two papers, we solve the twisted conjugacy problem for dihedral Artin groups. Artin groups are defined by a simple graph with edge labelling from $\Z_{\geq 2}$. Despite their long history of study, there are still open questions, such as the decidability of the word problem, which are not known for all Artin groups. A typical approach to understand these open problems is to consider subclasses of Artin groups, which are often defined using conditions on the defining graph. Dihedral Artin groups are one such class, and are defined by a simple graph with precisely two vertices and one edge, that is, they have a group presentation $G(m) = \langle a,b \mid {}_{m}(a,b) =  {}_{m}(b,a) \rangle$, where $m\geq 3$ and $_{m}(a,b)$ is the word $abab \dots$ of length $m$. 

These one-relator, torsion free groups are appealing to study from a computational perspective, and recent studies have found interesting results such as decidability of equations \cite{ciobanu_equations_2020}, and solvable conjugacy problem in linear time \cite{Holt2015}. We note here that the twisted conjugacy problem for dihedral Artin groups was already known to be solvable by Juh\'{a}sz \cite{juhasz_twisted_2011}, with respect to length-preserving automorphisms. To solve the twisted conjugacy problem for dihedral Artin groups with respect to all automorphisms, we consider when the edge labelling of our defining graph is either odd or even. In this first paper, we focus on odd dihedral Artin groups and further improve on Juh\'{a}sz's result, by constructing an algorithm to check if two words, representing group elements, are twisted conjugate. Moreover, this algorithm has linear time complexity, based on the length of the input words.

\begin{restatable*}{thm}{TCP}\label{thm:main result TCP solvable}
    The twisted conjugacy problem $\mathrm{TCP}(G(m))$ for dihedral Artin groups, where $m$ is odd, $m \geq 3$, is solvable in linear time.
\end{restatable*}
The key step is to utilise the fact that any odd dihedral Artin group $G(m)$ is isomorphic to a \emph{torus knot group}. In particular, $G(m)$ is isomorphic to the group with presentation $\langle x,y \mid x^{2}=y^{m} \rangle$, where $m$ is odd. With this presentation, we find that the automorphism group of $G(m)$ is finite of order two. Moreover, we can use a geodesic normal form derived by Fujii \cite{Fujii2018}, to construct a linear time algorithm to solve the $\mathrm{TCP}(G(m))$.

As an application of Theorem \ref{thm:main result TCP solvable}, we consider the conjugacy problem in extensions of odd dihedral Artin groups, using a criteria from \cite{bogopolski_orbit_2009} (see Theorem \ref{thm:orbit conditions}). This criteria requires understanding \emph{orbit decidability} for subgroups $A \leq \mathrm{Aut}(G)$, for some group $G$. The \emph{orbit decidability problem} asks whether we can determine if for two elements $u,v \in G$, there exists an automorphism $\phi \in A$ such that $v$ is conjugate to $\phi(u)$. To apply the criteria from \cite{bogopolski_orbit_2009}, we require what is known as the `action subgroup' of odd dihedral Artin groups to be orbit decidable. For odd dihedral Artin groups, we prove a stronger statement, by showing that all subgroups of the automorphism group of odd dihedral Artin groups are orbit decidable. 

\begin{restatable*}{thm}{orbit}\label{thm:orbit decid}
    Every finitely generated subgroup $A \leq \mathrm{Aut}(G(m))$, when $m$ is odd, is orbit decidable.
\end{restatable*}
Combined with Theorem \ref{thm:main result TCP solvable}, we can find new examples of groups with solvable conjugacy problem. 

\begin{restatable*}{thm}{extension}\label{thm:extension conj problem}
    Let $G = G(m) \rtimes H$ be an extension of an odd dihedral Artin group by a finitely generated group $H$ which satisfies conditions $(ii)$ and $(iii)$ from Theorem \ref{thm:orbit conditions} (e.g. let $H$ be torsion-free hyperbolic). Then $G$ has decidable conjugacy problem. 
\end{restatable*}
We note that the conjugacy problem is solvable for virtual dihedral Artin groups \cite[Corollary 3.2]{ciobanu_equations_2020}, which proves Theorem \ref{thm:extension conj problem} in the case where $H$ is finite.

The structure of this paper is as follows. After providing necessary details on decision problems, twisted conjugacy and odd dihedral Artin groups in \cref{sec:prelims}, we construct a linear time solution with respect to outer automorphisms in \cref{sec:algorithm}. This is then extended in \cref{sec:full algorithm} to solve the TCP for odd dihedral Artin groups. Finally in \cref{sec:extension}, we study orbit decidability, which allows us to use the criteria from \cite{bogopolski_orbit_2009} to find new examples of group extensions of odd dihedral Artin groups with solvable conjugacy problem. 

\section{Preliminaries}\label{sec:prelims}
All groups in this paper are finitely generated, and all finite generating sets are inverse-closed. For a subset $S$ of a group, we let $S^{\pm} = S \cup S^{-1}$, where $S^{-1} = \{s^{-1} \mid s \in S \}$. In certain cases, group presentations will be stated using generating sets which are not inverse-closed, and when we pass to the relevant inverse-closed generating set, we won't explicitly mention the corresponding change in the group presentation. 
\subsection{Decision problems and twisted conjugacy}
Let $X$ be a finite set, and let $X^{\ast}$ be the set of all finite words over $X$. For a group $G$ generated by $X$, we use $u=v$ to denote equality of words in $X^{\ast}$, and $u =_{G} v$ to denote equality of the group elements represented by $u$ and $v$. We let $l(w)$ denote the word length of $w$ over $X$. For a group element $g \in G$, we define the \emph{length} of $g$, denoted $|g|_{X}$, to be the length of a shortest representative word for the element $g$ over $X$. A word $w \in X^{\ast}$ is \emph{geodesic} if $l(w) = |\pi(w)|_{X}$, where $\pi \colon X^{\ast} \rightarrow G$ is the natural projection. If there exists a unique word $w$ of minimal length representing $g$, then we say $w$ is a \emph{unique geodesic}. Otherwise, $w$ is a non-unique geodesic. We let $\varepsilon$ denote the empty word over $X$ representing the identity element of $G$.    

\begin{defi}
    Let $G = \langle X \rangle$. The \emph{word problem} for $G$, denoted $\mathrm{WP}(G)$, takes as input a word $w \in X^{\ast}$, and decides whether it represents the trivial element of $G$. The \emph{conjugacy problem} for $G$, denotes $\mathrm{CP}(G)$, takes as input two words $u,v \in X^{\ast}$, and decides whether they represent conjugate elements in $G$. We write $u \sim v$ when $u$ and $v$ represent conjugate elements in $G$. 
\end{defi}

\begin{defi}
    Let $G = \langle X \rangle$, let $u,v \in X^{\ast}$, and let $\phi \in \mathrm{Aut}(G)$ be an automorphism of $G$.
    \begin{enumerate}
        \item We say $u$ and $v$ are \emph{$\phi$-twisted conjugate}, denoted $u \sim_{\phi} v$, if there exists an element $w \in G$ such that $v =_{G} \phi(w)^{-1}uw$.
        \item The \emph{$\phi$-twisted conjugacy problem} for $G$, denoted $\mathrm{TCP}_{\phi}(G)$, takes as input two words $u,v \in X^{\ast}$, and decides whether they represent groups elements which are $\phi$-twisted conjugate to each other in $G$.
        \item The (uniform) \emph{twisted conjugacy problem} for $G$, denoted $\mathrm{TCP}(G)$, takes as input two words $u,v \in X^{\ast}$ and $\phi \in \mathrm{Aut}(G)$, and decides whether $u$ and $v$ represent groups elements which are $\phi$-twisted conjugate in $G$.
    \end{enumerate}
\end{defi}
A solution to the $\mathrm{TCP}(G)$ implies a solution to $\mathrm{TCP}_{\phi}(G)$ for all $\phi \in \mathrm{Aut}(G)$, and therefore a solution to the $\mathrm{CP}(G)$ and the $\mathrm{WP}(G)$. We first observe that to solve the $\mathrm{TCP}(G)$, we can study the outer automorphism group, rather than the automorphism group as a whole. This was originally noted in \cite{gonzalez-meneses_twisted_2014}.

\begin{rem}\label{rmk:outer autos enough}
    Let $\psi \in \mathrm{Aut}(G)$, and let $[\psi]$ denote the equivalence class of $\psi$ in $\mathrm{Out}(G)$. We can write $\psi$ in the form $\psi = \iota_{g}\phi$, where $\iota_{g} \in \mathrm{Inn}(G)$ denotes conjugation by $g$, and $\phi \in [\psi]$. If $[\psi]$ is trivial, then the $\mathrm{TCP}_{\psi}(G)$ is equivalent to solving the $\mathrm{CP}(G)$ by the following relations:
\[ v =_{G} \psi(w)^{-1}uw \Leftrightarrow v =_{G} g^{-1}w^{-1}guw \Leftrightarrow gv =_{G} w^{-1}(gu)w,
\]
for some fixed $g \in G$. Otherwise, we have
\[ v =_{G} (\iota_{g}\phi)(w)^{-1}uw \Leftrightarrow v =_{G} g^{-1}\phi(w)^{-1}guw \Leftrightarrow gv =_{G} \phi(w)^{-1}(gu)w,
\]
for some fixed $g \in G$. In particular, if $[\psi]$ is non-trivial, then the $\mathrm{TCP}_{\psi}(G)$ is equivalent to solving the $\mathrm{TCP}_{\phi}(G)$ for some fixed $\phi \in [\psi]$. 
\end{rem}

\comm{
Since we are interested in finite order automorphisms, the following Lemma implies that it is enough to consider finite order outer automorphisms.
\begin{lem}
    If $\phi \in \mathrm{Out}(G)$ has infinite order, then $\psi \circ \phi \in \mathrm{Aut}(G)$ has infinite order.
\end{lem}
In studying finite order outer automorphisms, we not only capture all cases of finite order automorphisms, but also some infinite order automorphisms.}

\begin{defi}
   Let $G = \langle X \rangle$ be a group. We say $\phi \in \mathrm{Aut}(G)$ is:
\begin{enumerate}
    \item[(i)] \emph{length-preserving} if $|\pi(w)| = |\phi(\pi(w))|$ for any word $w \in X^{\ast}$, and
    \item[(ii)] \emph{non-length preserving} otherwise.
\end{enumerate}
\end{defi}

\subsection{Odd dihedral Artin groups}
\begin{defi}
    Let $m \in \Z_{>1}$. The dihedral Artin group is the group defined by the following presentation:
    \begin{equation}\label{eq:all groups}
    G(m) = \langle a,b \mid {}_{m}(a,b) =  {}_{m}(b,a) \rangle,
\end{equation}
     where $_{m}(a,b)$ is the word $abab \dots$ of length $m$.
\end{defi}
We first note the following result, which solves the $\mathrm{TCP}_{\phi}(G(m))$ in the case where the automorphism $\phi \in \mathrm{Aut}(G(m))$ is trivial.
\begin{thm}\label{thm:conj prob decid DAm}\cite[Proposition 3.1]{Holt2015}
    The conjugacy problem is solvable in dihedral Artin groups in linear time.
\end{thm}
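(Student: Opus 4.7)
The plan is to exploit the fact that every dihedral Artin group $G(m)$ is a central extension of a free product of cyclic groups by an infinite cyclic subgroup. If $m = 2k$ is even, then $\Delta = (ab)^k$ is already central and $G(m)/\langle \Delta\rangle \cong \Z \ast \Z/k$; if $m$ is odd, then (via the torus-knot presentation $\langle x, y \mid x^2 = y^m \rangle$ noted in the excerpt) $z = x^2 = y^m$ generates the center and $G(m)/\langle z\rangle \cong \Z/2 \ast \Z/m$. In either case the quotient is a free product of cyclic groups, in which conjugacy is linear-time decidable by comparing cyclic permutations of the reduced alternating normal form (for instance via Knuth--Morris--Pratt string matching).

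I would then reduce the conjugacy problem in $G(m)$ itself to two lighter tests, claiming that $u_1, u_2 \in G(m)$ are conjugate if and only if (i) they agree under the abelianization $G(m) \to G(m)^{ab}$, and (ii) their images $\bar u_1, \bar u_2 \in G(m)/Z(G(m))$ are conjugate in the quotient. The forward direction is immediate from conjugation-invariance of both maps. For the converse, lifting any conjugator $\bar g$ realising (ii) to an element $g \in G(m)$ gives $g^{-1} u_1 g = z^j u_2$ for some $j \in \Z$; applying abelianization and (i) yields $j\cdot\mathrm{ab}(z) = 0$. Since $\mathrm{ab}(z)$ has infinite order in $G(m)^{ab}$ (equal to $2m$ in the odd case, or $(k,k) \in \Z^2$ for $m = 2k$), this forces $j = 0$, so $g$ itself conjugates $u_1$ to $u_2$. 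Test (i) is a trivial exponent-sum comparison, and test (ii) falls to the linear-time conjugacy algorithm for free products of cyclic groups. The preparatory step --- rewriting an arbitrary input word in $\{a,b\}^{\ast}$ as $z^{n}\cdot s$ for a canonical coset representative $s$ --- can be carried out in linear time by a finite-state transducer derived from the Garside / torus-knot structure.

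The main obstacle, I expect, is not the conceptual reduction, which is standard for central extensions, but the careful engineering required to keep \emph{every} step linear time: constructing a finite-state rewriter that simultaneously exposes the central exponent and a canonical coset representative for $G(m)/Z(G(m))$, and verifying that cyclic-conjugacy detection in the free product quotient really matches the Knuth--Morris--Pratt bound rather than merely a polynomial bound. Once those two technical pieces are in place, the abelianization-plus-quotient reduction turns the conjugacy problem into a routine comparison.
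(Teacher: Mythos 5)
The paper does not prove this result; it is cited from Holt--Rees \cite{Holt2015}, whose approach (as far as this paper's framing suggests, and consistent with their broader program on conjugacy in large-type Artin groups) is to work directly with geodesic representatives in the standard Artin generators $\{a,b\}$ and reduce conjugacy to a cycling/cyclic-geodesic comparison. Your argument is a genuinely different, and correct, route: it exploits the central extension
\[
1 \longrightarrow Z(G(m)) \longrightarrow G(m) \longrightarrow G(m)/Z(G(m)) \longrightarrow 1,
\]
with $G(m)/Z(G(m))$ a free product of cyclic groups ($\Z/2 * \Z/m$ for $m$ odd, $\Z*\Z/k$ for $m=2k$), and reduces conjugacy to (i) matching abelianizations and (ii) conjugacy in the free-product quotient. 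The pivotal lemma you use --- that a lifted conjugator $g$ realising (ii) gives $g^{-1}u_1g = z^j u_2$ and that (i) forces $j=0$ --- is sound precisely because $\mathrm{ab}(z)$ has infinite order in $G(m)^{\mathrm{ab}}$; your computation of $\mathrm{ab}(z)$ ($2m$ in the odd case where $G^{\mathrm{ab}}\cong\Z$, and $(k,k)\in\Z^2$ in the even case) checks out, so $Z(G(m))$ injects into $G(m)^{\mathrm{ab}}$ and the iff holds. Your route is more conceptual and modular: it delegates the combinatorics to the well-understood linear-time conjugacy test in free products of cyclics (cyclic reduction plus a cyclic-rotation string match), at the cost of having to build a linear-time normal-form transducer that splits out the central exponent --- which is exactly the kind of Garside rewriting that the present paper's Appendix \cref{prop:linear time geos} sets up in the odd case, so the obstacle you flag is real but tractable. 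The direct geodesic route buys finer structural control over conjugacy representatives in the Artin generators, which Holt--Rees need for their applications to larger Artin groups, whereas your argument is shorter and makes the linear-time claim almost automatic once the transducer is in place.
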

If $m=2$, then $G(m)$ is the free abelian group, and if $m=3$, then $G(m)$ is the braid group $B_{3}$. We note that the twisted conjugacy problem has been solved in both of these cases \cite{bogopolski_orbit_2009, gonzalez-meneses_twisted_2014}. For the remainder of this paper, we will assume $m \geq 3$ is odd. To solve the TCP for $G(m)$, we will work with a different generating set, which we now define. 

\begin{defi}\label{defn: gen set free product}
    Let $m = 2k+1$, where $k \in \Z_{\geq 1}$. We define
    \begin{equation}\label{eqn: m odd}
    P = \langle x,y \mid x^{2} = y^{m} \rangle
\end{equation}
    to be the presentation of a \emph{torus-knot group}. For notation we let $X = \{x,y\}^{\pm}$, which we refer to as the \emph{free product generating set}.
\end{defi}

\begin{lem}
    When $m \geq 3$ is odd, the presentations given in \cref{eq:all groups} and \cref{eqn: m odd} represent isomorphic groups.  
\end{lem}
\begin{proof}
    We apply a sequence of Tietze transformations to $G(m)$ to obtain $P$ as follows. First, set $x = {}_{m}(a,b)$ and $y = ab$, which gives us 
    \[ G(m) \cong \langle a,b,x,y \mid {}_{m}(a,b) = {}_{m}(b,a), x = {}_{m}(a,b), y = ab \rangle.
    \]
    Since $x = by^{k} = y^{k}a$, then $x^{2} = y^{k}aby^{k} = y^{2k+1}$, and so our relations can be rewritten as 
    \[ G(m) \cong \langle a,b,x,y \mid x = {}_{m}(a,b) = {}_{m}(b,a), y = ab, x^{2} = y^{2k+1}  \rangle.
    \]
    Next we remove the generator $a$, by rewriting our relation $y = ab$ as $a = yb^{-1}$. Removing redundant relations gives us 
    \[ G(m) \cong \langle b,x,y \mid x = y^{k+1}b^{-1} = by^{k}, x^{2} = y^{2k+1} \rangle.
    \]
    Finally we remove the generator $b$ in a similar way by rewriting the relation $x=by^{k}$, and removing redundant relations. This leaves us with $G(m) \cong P$ as required. 
\end{proof}
Before considering twisted conjugacy, we first need to understand the outer automorphisms of this group.
\begin{thm}\label{thm:odd autos}\cite[Theorem C]{gilbert_tree_2000}
    Let $G(m)$ be defined as in \cref{eqn: m odd}. Then $\mathrm{Out}(G(m)) \cong C_{2}$. 
\end{thm}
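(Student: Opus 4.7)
The approach I would take exploits the amalgamated product structure
\[
G(m) \;\cong\; \langle x \rangle *_{\langle t \rangle} \langle y \rangle \;\cong\; \Z *_{\Z} \Z,
\]
where $t := x^{2} = y^{m}$. The first step is to identify $Z(G(m)) = \langle t \rangle$: the element $t$ commutes with both generators, while the quotient $G(m)/\langle t \rangle \cong C_{2} * C_{m}$ is a non-trivial free product and therefore has trivial center, so no further central elements exist in $G(m)$. The second step is to compute the abelianization: the relation $x^{2} = y^{m}$ becomes $2[x] = m[y]$, and because $\gcd(2, m) = 1$ (since $m$ is odd), we obtain $G(m)^{\mathrm{ab}} \cong \Z$, with $[x] \mapsto m$ and $[y] \mapsto 2$ under a canonical choice of isomorphism. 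Every $\phi \in \mathrm{Aut}(G(m))$ therefore induces $\pm 1$ on $G(m)^{\mathrm{ab}}$.

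Next, I would exhibit the candidate generator of $\mathrm{Out}(G(m))$: define $\phi_{0} \colon x \mapsto x^{-1},\ y \mapsto y^{-1}$, and verify that it respects the defining relation, since $x^{-2} = y^{-m}$ is equivalent to $x^{2} = y^{m}$. It manifestly has order two, and it is not inner because inner automorphisms act trivially on $G(m)^{\mathrm{ab}}$ whereas $\phi_{0}$ acts as $-1$. Hence $\langle \phi_{0} \rangle$ embeds as a $C_{2}$ subgroup of $\mathrm{Out}(G(m))$.

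To prove this exhausts $\mathrm{Out}(G(m))$, I would descend an arbitrary $\phi$ to an automorphism $\bar{\phi}$ of $C_{2} * C_{m} = G(m)/Z$. The maximal finite cyclic subgroups of $C_{2} * C_{m}$ are precisely the conjugates of $\langle \bar{x} \rangle$ (of order $2$) and $\langle \bar{y} \rangle$ (of order $m$); these two conjugacy classes cannot be swapped because $2 \neq m$. Modulo inner automorphisms we may therefore arrange $\bar{\phi}(\bar{x}) = \bar{x}$ and $\bar{\phi}(\bar{y}) = \bar{g}\, \bar{y}^{k}\, \bar{g}^{-1}$ for some $\bar{g}$ and some $k$ coprime to $m$. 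Lifting to $G(m)$ and reading the equation in $G(m)^{\mathrm{ab}}$ gives $2k + 2bm = \pm 2$ for the integer $b$ recording the central correction, so $k \equiv \pm 1 \pmod{m}$. A parallel check on $\phi(x)$ pins down its central correction, and the centralisers $C_{G(m)}(x) = \langle x \rangle$ and $C_{G(m)}(y) = \langle y \rangle$ allow the residual conjugator $g$ to be absorbed by a further inner automorphism. Exactly two outer classes survive: the identity ($k \equiv 1$) and $\phi_{0}$ ($k \equiv -1$).

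The main obstacle I anticipate is the last bookkeeping step, namely showing that once the abelianization has fixed $k$ modulo $m$, no genuine freedom remains in the choice of conjugator $\bar{g}$ or the central corrections, so that the two outer classes really are distinct and exhaustive. This is where one either invokes a Bass--Serre theoretic result on automorphisms of amalgamated products over a cyclic edge group (which is the route Gilbert takes for tree products of cyclic groups), or carries out a direct argument using normal forms in the amalgam $\Z *_{\Z} \Z$. I expect the Bass--Jiang style theorem, applied to the one-edge graph of cyclic groups underlying $G(m)$, to give the shortest and most conceptual route.
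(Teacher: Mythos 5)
The paper does not prove this theorem; it cites it directly from Gilbert \cite[Theorem C]{gilbert_tree_2000}, whose general result concerns automorphisms of tree products of infinite cyclic groups amalgamated over cyclic subgroups, established via Bass--Serre theory. Your outline is a correct and fairly standard route to the special case at hand, and it is close in spirit to the cited reference: identify the centre $\langle t\rangle$ as characteristic, pass to $G(m)/Z \cong C_2 * C_m$, classify automorphisms of that free product, and use the abelianisation $G(m)^{\mathrm{ab}}\cong\Z$ (valid precisely because $\gcd(2,m)=1$) to cut the $(\Z/m)^{\times}$ worth of exponent automorphisms of the $C_m$ factor down to $k\equiv\pm 1 \pmod m$. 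The one genuine gap, which you yourself correctly flag, is the final absorption of the conjugator $g$: having normalised so that $\phi(x)=x$ and $\phi(y)=g\,y\,g^{-1}$, you still need $g\in\langle x\rangle\langle y\rangle$ before the centraliser computation $C_{G(m)}(x)=\langle x\rangle$, $C_{G(m)}(y)=\langle y\rangle$ makes $\phi$ inner (respectively inner composed with $\phi_0$). This does not follow from the abelianisation alone and genuinely needs the amalgam structure. One clean way to supply it: $G(m)$ acts on the Bass--Serre tree $T$ of the splitting $\Z *_{\Z} \Z$, and $\phi$ induces an automorphism of $T$ fixing the vertex $v_x$ with stabiliser $\langle x\rangle$ and sending $v_y$ to $g\cdot v_y$; since $v_x$ and $v_y$ are adjacent and $\phi$ preserves adjacency, $g\cdot v_y$ lies in the $\langle x\rangle$-orbit of $v_y$, i.e.\ $g\cdot v_y = a\cdot v_y$ for some $a\in\langle x\rangle$, whence $a^{-1}g\in\mathrm{Stab}(v_y)=\langle y\rangle$ and $g\in\langle x\rangle\langle y\rangle$. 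With that lemma in place your argument closes, and is essentially Gilbert's tree-product analysis specialised to a single edge.
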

In particular, $\mathrm{Aut}(G(m)) = \mathrm{Inn}(G(m)) \sqcup \mathrm{Inn}(G(m))\cdot \phi$ where $\phi$ inverts all generators, i.e. 
\begin{equation}\label{eqn: odd phi map}
    \phi \colon x \mapsto x^{-1}, \; y \mapsto y^{-1}.
\end{equation}
For the remainder of this section, we assume $\phi \in \mathrm{Aut}(G(m))$ as in \cref{eqn: odd phi map}. We let $\mathsf{rev}(w)$ denote the word $w \in X^{\ast}$ written in reverse, and note that for any word $w \in X^{\ast}$, $\phi(w)^{-1} = \mathsf{rev}(w)$. Indeed if $w = w^{a_{1}}_{i_{1}}\dots w^{a_{n}}_{i_{n}} \in X^{\ast}$, where each $w_{i_{j}} \in X$ and $a_{j} \in \Z_{\neq 0}$ $(1 \leq j \leq n)$, then $\phi(w) = w^{-a_{1}}_{i_{1}}\dots w^{-a_{n}}_{i_{n}}$, and so $\phi(w)^{-1} = w^{a_{n}}_{i_{n}} \dots w^{a_{1}}_{i_{1}} = \mathsf{rev}(w)$. Therefore checking if two elements are twisted conjugate reduces to checking if $v =_{G} \mathsf{rev}(w)uw$. 

We now provide a set of geodesic normal forms for $G(m)$. These were originally derived from \cite{Fujii2018}, and further details can be found in \cite{ciob_conjgrowth} and \cref{appendix}. 
\begin{prop}\label{prop:geos}
    Any element $g \in G(m)$ can be represented by a geodesic word $u = x^{a_{1}}y^{b_{1}}\dots y^{b_{\tau}}\Delta^{c}$, where $\Delta=_{G} x^2 =_{G} y^{2k+1}$, with conditions given in \cref{tab:geo forms odd}.
\end{prop}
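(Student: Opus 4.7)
The plan is to exploit two structural facts about $G(m) = \langle x, y \mid x^2 = y^{2k+1} \rangle$: that $\Delta := x^2 = y^{2k+1}$ is central and generates the centre of $G(m)$, and that the quotient $G(m)/\langle\Delta\rangle$ is isomorphic to the free product $C_2 * C_{2k+1}$. Together these give $G(m)$ the structure of a central cyclic extension of a free product, so a normal form for the free product lifts to geodesics in $G(m)$.

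First I would verify these structural facts: $\Delta$ visibly commutes with both generators, and killing $\Delta$ gives $\langle x, y \mid x^2, y^{2k+1} \rangle \cong C_2 * C_{2k+1}$, which in particular implies $\langle \Delta \rangle$ is the full centre. Given any word $w$ representing an element of $G(m)$, I would push all occurrences of $x^{\pm 2}$ and $y^{\pm(2k+1)}$ to the right, collecting them as a single $\Delta^c$; the remaining prefix projects to a unique alternating normal form in $C_2 * C_{2k+1}$ with $x$-syllables and $y$-syllables. For each such syllable I would select the shortest lift back to $G(m)$: an $x$-syllable becomes $x$ or $x^{-1}$ (length one), and a $y$-syllable $\bar y^b$ with $0<b<2k+1$ becomes either $y^b$ or $y^{b-(2k+1)}$ so that $|b_i|\le k$. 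Each sign choice shifts $c$ by a controlled amount, and the conditions in the accompanying table fix the remaining canonical choices (for instance, the sign of $a_1$).

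Finally I would prove geodesicity. The starting point is that any two representatives of the same group element project to the same alternating normal form in $C_2 * C_{2k+1}$, and so have the same syllable sequence up to a central shift; comparing word lengths then reduces to checking that choosing each syllable of minimal length together with the optimal central exponent $c$ yields a global minimum. The main obstacle is controlling cross-syllable rewritings via $x^2 = y^{2k+1}$: one can trade part of an $x$-syllable against $\Delta^{\pm 1}$ (or a $y$-block) and vice versa, so the per-syllable argument must be augmented with a careful length comparison showing these trades never strictly shorten the word. This is the heart of Fujii's analysis in \cite{Fujii2018}, which I would adapt to produce the specific form listed in the proposition.
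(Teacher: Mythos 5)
Your proposal takes essentially the same route as the paper: both collect $x^{\pm 2}$ and $y^{\pm(2k+1)}$ into a central power $\Delta^{c}$, reduce the remaining prefix to an alternating syllable form (your central extension of $C_{2}*C_{2k+1}$ is precisely the Garside normal-form stage in the paper's appendix), and then minimize syllable exponents and $c$ jointly, deferring to Fujii~\cite{Fujii2018} for the proof that the resulting forms in \cref{tab:geo forms odd} are genuinely geodesic and exhaustive. The free-product / central-extension framing is a clean way to package the first step, but the substantive length analysis you flag as ``the heart'' is exactly what the paper outsources to Fujii as well.
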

We refer to $\Delta$ as the \emph{Garside element}, and note that any power of $\Delta$ is central.

\begin{defi}\label{defn:garside free}
    Let $u = x^{a_{1}}y^{b_{1}}\dots y^{b_{\tau}}\Delta^{c} \in X^{\ast}$ be geodesic. We say $u$ is \emph{Garside-free} if $c = 0$. 
\end{defi}
Let $u \in X^{\ast}$ be a geodesic, where $l(u) > 1$. We can write $u$ in the form $u = u_{1}u_{2}$ for some non-empty geodesic words $u_{1}, u_{2} \in X^{\ast}$. We say $u_{1}$ is a \emph{proper prefix} of $u$, and $u_{2}$ is a \emph{proper suffix} of $u$. We let $\mathcal{P}(u)$ and $\mathcal{S}(u)$ be the set of all possible proper prefixes and suffixes of $u$ respectively. With these definitions, we can define an equivalent notion of cyclic permutations, with respect to twisted conjugacy.
\begin{defi}\label{defn: odd cyclic perms}    
    Let $u \in X^{\ast}$ be a geodesic, where $l(u) > 1$. Let $u =u_{1}u_{2} $ for some $u_{1} \in \mathcal{P}(u), \; u_{2} \in \mathcal{S}(u)$. Let $\phi \in \mathrm{Aut}(G(m))$ as defined in \cref{eqn: odd phi map}. We define a \emph{$\phi$-cyclic permutation of a prefix} of $u$ to be the following operation on $u$:
    \[ u = u_{1}u_{2} \mapsto u_{2}\phi(u_{1}).
    \]
    Similarly we define a \emph{$\phi$-cyclic permutation of a suffix} of $u$ as
    \[ u = u_{1}u_{2} \mapsto \phi(u_{2})u_{1}.     \]
\end{defi}
For brevity, we will use $\xleftrightarrow{\phi}$ to denote a $\phi$-cyclic permutation of either a prefix or suffix. We note that any $\phi$-cyclic permutation can be reversed as follows:
\[ u_{1}u_{2} \mapsto u_{2}\phi(u_{1}) \mapsto \phi(\phi(u_{1}))u_{2} = u_{1}u_{2}.
\]
If two geodesics $u,v \in X^{\ast}$ are related by a $\phi$-cyclic permutation, then $u \sim_{\phi} v$. 

\section{Algorithm for Outer Automorphisms}\label{sec:algorithm}
Our main goal is to solve the twisted conjugacy problem $\mathrm{TCP}_{\phi}(G(m))$, where $\phi \in \mathrm{Aut}(G(m))$ as in \cref{eqn: odd phi map}. We will then generalise this algorithm, using Remark \ref{rmk:outer autos enough}, to solve the twisted conjugacy problem $\mathrm{TCP}(G(m))$.

We will first show that when $\phi \in \mathrm{Aut}(G(m))$ as in \cref{eqn: odd phi map}, then every geodesic is twisted conjugate to a minimal length twisted conjugacy representative which is Garside free. Moreover, for two minimal representatives which are twisted conjugate, we will show that they are related by a finite sequence of $\phi$-cyclic permutations and equivalent geodesics. Since $\phi$ is length-preserving, this will lead to our basic algorithm, which we later improve to give a linear time algorithm.
\subsection{Garside free representatives}\label{sec:simple}
\begin{defi}\label{defn:odd Type 3}
    Let $u \in X^{\ast}$ be geodesic. We say $u$ is of Type (3) if $u$ is not of Type (1) or (2) (see \cref{tab:geo forms odd}). In particular, Type (3) geodesics are Garside free (recall Definition \ref{defn:garside free}). We let $\overline{(3)} \subset (3)$ denote the set of all words $u \in (3)$, such that $u$ starts and ends with opposite letters.
\end{defi}
Our first step is to prove the following result, which shows that any geodesic $u \in X^{\ast}$ can be transformed to a geodesic $u' \in X^{\ast}$, which is Garside free, such that $u \sim_{\phi} u'$. 
\begin{prop}\label{prop:rewrite}
    Let $u \in \X$ be a geodesic. Then there exists a geodesic $u' \in \overline{(3)}$, such that $u \sim_{\phi} u'$ and $l(u') \leq l(u)$. 
\end{prop}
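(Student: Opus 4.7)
The plan is to apply a sequence of length-preserving $\phi$-cyclic permutations together with a direct twisted conjugation by a power of $\Delta$, so as to transform $u$ into a Garside-free geodesic with opposite start and end letters. Writing $u = v\Delta^c$ with $v$ Garside-free per \cref{prop:geos}, we exploit that $\Delta$ is central and $\phi(\Delta) = \Delta^{-1}$: twisted conjugation by $\Delta^k$ satisfies
\[
\phi(\Delta^k)^{-1}\, u\, \Delta^k \;=\; \Delta^k \, v\Delta^c \, \Delta^k \;=\; v\Delta^{\,c+2k},
\]
so $u \sim_\phi v\Delta^{c'}$ for any $c' \equiv c \pmod 2$. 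Choosing $c' \in \{0,1\}$ produces a representative of word length $l(v) + 2c' \leq l(u)$, either $v$ (and we skip to the final step) or $v\Delta$.

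If $c' = 1$, the short conjugator $w = x^{-1}$ (for which $\phi(x^{-1})^{-1} = x^{-1}$) yields
\[
v\Delta \;\sim_\phi\; x^{-1}\,v\,\Delta\,x^{-1} \;=\; x^{-1}\,v\,x^{2}\,x^{-1} \;=\; x^{-1}\,v\,x,
\]
a word of length $l(v)+2 = l(u)$ whose geodesic form is Garside-free, provided $v$ does not begin with $x^{-1}$ or end with $x$. In the remaining boundary configurations, cancellation between the conjugator and a terminal $x^{\pm 1}$-syllable of $v$ would produce a new $\Delta^{\pm 1}$ factor; there I intend to first apply a $\phi$-cyclic permutation that cycles the offending syllable to the opposite end of $v\Delta$, and only then conjugate by $x^{\pm 1}$. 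The bounded syllable exponents of the Fujii normal form (\cref{tab:geo forms odd}) control the resulting interactions.

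Once we have a Garside-free representative, suppose it takes the form $u = x^{a_1}y^{b_1}\cdots x^{a_s}$, starting and ending with a power of $x$. The $\phi$-cyclic permutation of the trailing $x^{a_s}$ produces
\[
u \;\xleftrightarrow{\phi}\; x^{-a_s}\cdot x^{a_1}y^{b_1}\cdots y^{b_{s-1}} \;=\; x^{\,a_1-a_s}\,y^{b_1}\cdots y^{b_{s-1}},
\]
a Garside-free word of length at most $l(u)$. If $a_1 \neq a_s$, the result lies in $\overline{(3)}$; if $a_1 = a_s$, the $x$-syllable cancels, the length strictly decreases, and we induct. The case when $u$ starts and ends with a power of $y$ is entirely symmetric. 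The main obstacle is the boundary-case analysis in the second step: eliminating the residual $\Delta$ while preserving the length bound, without reintroducing a Garside factor through cancellation, requires a careful case split on the first and last syllables of $v$ guided by \cref{tab:geo forms odd}.
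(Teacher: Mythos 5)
Your first step (twisted conjugation by a power of $\Delta$, reducing $c$ modulo~$2$) is essentially the paper's rule (R1), and the overall strategy --- clear the Garside power, then fix up the ends --- is in the same spirit as the paper's proof. However, the execution contains a concrete error in the final cyclic reduction step. You claim that if the Garside-free word $u = x^{a_1}y^{b_1}\cdots x^{a_s}$ starts and ends with $x$-syllables with $a_1 \neq a_s$, the $\phi$-cyclic permutation of the trailing syllable yields $x^{a_1 - a_s}y^{b_1}\cdots y^{b_{s-1}} \in \overline{(3)}$. But in Fujii's normal form the $x$-exponents satisfy $|a_i| \leq 1$, so $a_1 \neq a_s$ with both nonzero means $a_1 - a_s = \pm 2$, i.e.\ a leading factor $x^{\pm 2} = \Delta^{\pm 1}$. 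The result is \emph{not} Garside-free, contradicting your conclusion. This is exactly why the paper's Step~2 explicitly loops back to (R1) whenever (R2)--(R3) produces a $\Delta^{\pm 2}$, and why it records the Garside contribution carefully through rules (R2) and (R3) rather than performing a bare cyclic permutation.

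There is a second, related problem of ordering. The paper first brings the Garside-free part $v$ into $\overline{(3)}$ (via (R2)--(R3), tracking the $\Delta$-power created along the way), and only \emph{then} eliminates a residual $\Delta^{\pm 1}$ with (R4)/(R5), which are designed precisely for a word whose Garside-free part starts and ends with opposite letters. You invert this order: you first conjugate $v\Delta$ by $x^{-1}$ to kill the $\Delta$, and then cyclically reduce. This is where your deferred ``careful case split'' absorbs all the difficulty. In particular, $x^{-1}vx$ need not be geodesic even when no free cancellation occurs at the boundary: if $v$ contains a $y^{\pm(k+1)}$ syllable (allowed in Types $(1)$ and $(3^{\pm})$), then $x^{-1}vx$ has $\Pos_x = \Neg_x = 1$ and $\Pos_y$ or $\Neg_y$ equal to $k+1$, which violates condition~3 or~4 of \cref{prop:geods}. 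Reducing it to geodesic form can reintroduce a $\Delta$ factor, and you give no termination argument for the resulting loop. The missing content is precisely what the paper packages into its rewrite rules and the three-step reduction in the proof of \cref{prop:rewrite}: a controlled reduction of the Garside-free part to $\overline{(3)}$ that explicitly tracks the induced change in the $\Delta$-exponent, followed by a single application of (R4) or (R5). Without that bookkeeping, neither the Garside-freeness of the output nor the length bound $l(u') \leq l(u)$ is established.
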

We first consider some simple cases.
\begin{lem}\label{cor: odd easy case}
    Let $u = x^{t}$ and $v = y^{t+mc}$, for some $t \in \Z_{\geq 0}, c \in \Z$. Then
    \begin{equation*}
        \begin{split}
        u = x^{t} \sim_{\phi}
        \begin{cases}
        \varepsilon, & t \quad \text{is even}, \\
        x^{\pm 1}, & t \quad \text{is odd},
    \end{cases}
        \end{split}
        \quad \text{and} \quad
    \begin{split}
        v = y^{t+mc} \sim_{\phi} 
        \begin{cases}
        \varepsilon, & c,t \quad \text{have same parity}, \\
        y^{\pm 1}, & c,t \quad \text{have opposite parity}.
    \end{cases}
    \end{split}
    \end{equation*}
\end{lem}
\begin{proof}
    First consider $u = x^{t}$. If $t = 2d$, for some $d \in \Z$, we can apply the following $\phi$-cyclic permutation to $u$:
    \[ u =_{G} x^{d}x^{d} \xleftrightarrow{\phi} x^{-d}x^{d} =_{G} \e.
    \]
    Similarly if $t = 2d+1$, we have
    \[ u =_{G} x^dx^{d+1} \xleftrightarrow{\phi} x^{-d}x^{d+1} =_{G} x \xleftrightarrow{\phi} x^{-1}.     \]
    Now suppose $v = y^{t+mc} =_{G.} y^{t}\Delta^{c}$.  If $c = 2d$, for some $d \in \Z$, then $v =_{G} y^{t}\Delta^{2d} \LR \Delta^{-d}y^{t}\Delta^{d} =_{G} y^{t}.$ If $t$ is even, then $y^{t} \LR y^{-t/2}y^{t/2} =_{G} \varepsilon$. Similarly if $t$ is odd, then $y^{t} \LR y^{\pm 1}$. Otherwise, if $c = 2d+1$, then $v = y^{t}\Delta^{2d+1} \LR \Delta^{-d}y^{t}\Delta \Delta^{d} =_{G} y^{t}\Delta = y^{t+m}.$ If $t$ is odd, then $t+m$ is even, and so $y^{t+m} \LR \e$. Similarly if $t$ is even, then $t+m$ is odd and so $y^{t+m} \LR y^{\pm 1}$.
\end{proof}
\comm{
\begin{cor}\label{simple1}
    Let $u = \Delta^{c}, \; v = \Delta^{d} \in X^{\ast}$ for some $c,d \in \Z$. Then $u \sim_{\phi} v \sim_{\phi} \e$.
\end{cor}}
To prove Proposition \ref{prop:rewrite}, we will construct a set of rewriting rules which can be repeatedly applied to a geodesic $u \in X^{\ast}$ to obtain an element $u' \in \overline{(3)}$. These rules are invariant under $\sim_{\phi}$, that is, after applying a sequence of these rules to $u$, we obtain a word $u' \in \overline{(3)}$ such that $u \sim_{\phi} u'$. Let $v \in \X$ such that $v$ is Garside free, let $x,y \in X$ and $\epsilon, \epsilon_{1}, \epsilon_{2} \in \{\pm 1\}$. We define our set of rewrite rules as follows.
\begin{itemize}
    \item[(R1):] $v\Delta^{c} \rightarrow \begin{cases}
        v, & c = 2d, \; d \in \Z_{\neq 0}, \\
        v\Delta, & c = 2d+1, \; d \in \Z_{\neq 0}.
    \end{cases}$
    \item[(R2):] For $c \in \{0,1\}$, $x^{\epsilon_{1}} vx^{\epsilon_{2}}\Delta^{c} \rightarrow \begin{cases}
        v\Delta^{c}, & \epsilon_{1} = \epsilon_{2}, \\
        v\Delta^{c + \epsilon}, & \epsilon_{1} \neq \epsilon_{2}.
    \end{cases}$
    \item[(R3):]  For $c \in \{0,1\}$, $y^{b_{1}}vy^{b_{n}}\Delta^{c} \rightarrow \begin{cases}
        y^{b_{1}-b_{n}}v\Delta^{c} & b_{1}-b_{n} \in [-(m-1), m-1] \cap \Z_{\neq 0}, \\ 
        v\Delta^{c}, & b_{1}=b_{n}, \\
        v\Delta^{c + \epsilon}, & |b_{1}-b_{n}| = m.
    \end{cases}$
    \item[(R4):] $x^{\epsilon_{1}}vy^{b}\Delta^{\epsilon_{2}} \rightarrow x^{-\epsilon_{1}}vy^{b}$. 
    \item[(R5):] $y^{b}vx^{\epsilon_{1}}\Delta^{\epsilon_{2}} \rightarrow y^{b}vx^{-\epsilon_{1}}$.
\end{itemize}
All rules either preserve or decrease word length.
\begin{prop}\label{prop:odd rules invariant}
    The rules (R1)-(R5) are invariant under $\sim_{\phi}$.
\end{prop}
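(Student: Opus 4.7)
The plan is to verify each rule (R1)--(R5) in turn by exhibiting an explicit $\phi$-cyclic permutation (or a short composition of such permutations together with an already-verified rule (R1)), using two structural facts throughout: the element $\Delta =_{G} x^{2} =_{G} y^{m}$ is central, so $\Delta^{c}$ commutes with every word; and $\phi(\Delta) = \Delta^{-1}$, since $\phi$ inverts each generator. I would verify (R1) first so that it can be reused inside the remaining cases.

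For (R1), writing $c = 2d$ or $c = 2d+1$, the $\phi$-cyclic permutation realised by the central element $w = \Delta^{-d}$ gives
\[
\phi(w)^{-1} v \Delta^{c} w \;=\; \Delta^{-d} v \Delta^{c} \Delta^{-d} \;=\; v \Delta^{c-2d},
\]
which is $v$ or $v\Delta$ respectively. For (R2), one prefix $\phi$-cyclic permutation peeling off $x^{\epsilon_{1}}$ yields $v x^{\epsilon_{2}} \Delta^{c} x^{-\epsilon_{1}} =_{G} v x^{\epsilon_{2}-\epsilon_{1}} \Delta^{c}$ by centrality, which is $v\Delta^{c}$ when $\epsilon_{1}=\epsilon_{2}$ and $v\Delta^{c+\epsilon_{2}}$ when $\epsilon_{1}\neq\epsilon_{2}$ (using $x^{\pm 2} = \Delta^{\pm 1}$); so $\epsilon = \epsilon_{2}$.

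Rule (R3) is the main obstacle, for two reasons. An analogous suffix $\phi$-cyclic permutation on the block $y^{b_{n}}\Delta^{c}$ produces
\[
y^{b_{1}} v y^{b_{n}} \Delta^{c} \;\LR\; \phi(y^{b_{n}}\Delta^{c}) y^{b_{1}} v \;=_{G}\; y^{b_{1}-b_{n}} v \Delta^{-c},
\]
so reaching the stated right-hand side requires identifying $\Delta^{-c}$ with $\Delta^{c}$; since $c\in\{-1,0,1\}$ both have the same parity, and (R1) identifies them under $\sim_{\phi}$. Secondly, when $|b_{1}-b_{n}|=m$ one rewrites $y^{b_{1}-b_{n}}=\Delta^{\pm 1}$, slides it past $v$ by centrality, and absorbs the result into a single $\Delta^{c+\epsilon}$ factor, again invoking (R1) to match parity. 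The bookkeeping of the two signs (that of $-c$ versus $c$, and that of $\pm 1$ coming from the modular wrap-around) is what makes (R3) the most delicate case.

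For (R4) and (R5), the same move works: centrality rewrites the terminal $\Delta^{\epsilon_{2}}=x^{2\epsilon_{2}}$ so that it merges with the leading $x^{\epsilon_{1}}$ (in (R4)) or the trailing $x^{\epsilon_{1}}$ (in (R5)), yielding $x^{\epsilon_{1}+2\epsilon_{2}}$ in the appropriate position. When $\epsilon_{1}=-\epsilon_{2}$ this exponent already equals $-\epsilon_{1}$, so the rule holds as a genuine equality in $G$. When $\epsilon_{1}=\epsilon_{2}$ one decomposes $x^{3\epsilon_{1}} = x^{-\epsilon_{1}}\Delta^{2\epsilon_{1}}$, uses centrality once more to push $\Delta^{2\epsilon_{1}}$ to the end, and appeals to (R1) to erase the even power of $\Delta$. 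The upshot is that once (R1) is established, the remaining four rules are each a single $\phi$-cyclic permutation plus centrality plus at most one cleanup application of (R1), and only (R3) requires any real care.
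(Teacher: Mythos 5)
Your proof is correct, and it rests on the same two facts as the paper's---centrality of $\Delta$ and $\phi(\Delta)=\Delta^{-1}$---but you choose a slightly less economical way to deploy them. The paper's argument always uses centrality \emph{first} to slide $\Delta^{c}$ into the interior of the word, and then cyclically permutes a single bare generator power off one end; because of this, $\phi$ never touches the power of $\Delta$, and each of (R2)--(R5) is a single $\phi$-cyclic permutation with no sign bookkeeping. You instead, in (R3), include $\Delta^{c}$ in the block being permuted, so $\phi$ inverts it and you land on $\Delta^{-c}$ rather than $\Delta^{c}$, forcing a corrective call to (R1); similarly in (R4)/(R5) with $\epsilon_{1}=\epsilon_{2}$ you route through centrality and (R1) instead of a single suffix permutation. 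This is exactly why (R3) felt to you like ``the main obstacle,'' but the obstacle is self-imposed: if you first rewrite $y^{b_{1}}vy^{b_{n}}\Delta^{c}=_{G}y^{b_{1}}v\Delta^{c}y^{b_{n}}$ and permute only $y^{b_{n}}$, (R3) becomes exactly as routine as (R2). One small wrinkle worth flagging in your version: (R1) as stated requires the residual exponent $d$ to be nonzero, so it does not literally apply forward to $v'\Delta^{1}$ (which is what appears when $c=-1$). The fix is to use the symmetry of $\sim_{\phi}$ and apply (R1) to $v'\Delta^{-1}$ instead, or simply twisted-conjugate by $\Delta^{-1}$ directly; the paper's route avoids this edge case entirely.
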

\begin{proof}
    Each rule can be obtained by applying $\phi$-cyclic permutations and relations from $G(m)$ as follows.
    \begin{enumerate}
        \item[(R1):] If $c=2d$, then $v\Delta^{2d} \LR \Delta^{-d}v\Delta^{d} =_{G} v$. If $c = 2d+1$, then $v\Delta^{2d+1} \LR \Delta^{-d}v\Delta^{d}\Delta =_{G} v\Delta$.
        \item[(R2):] $x^{\epsilon_{1}} vx^{\epsilon_{2}}\Delta^{c} =_{G} x^{\epsilon_{1}} v\Delta^{c}x^{\epsilon_{2}} \LR x^{\epsilon_{1}-\epsilon_{2}}v\Delta^{c} $. This is equivalent to $v\Delta^{c}$ when $\epsilon_{1} = \epsilon_{2}$, and $x^{\pm 2}v\Delta^{c} =_{G} v\Delta^{c \pm 1}$ when $\epsilon_{1} \neq \epsilon_{2}$.
        \item[(R3):] $y^{b_{1}}vy^{b_{n}}\Delta^{c} =_{G} y^{b_{1}}v\Delta^{c}y^{b_{n}} \LR y^{b_{1}-b_{n}}v\Delta^{c}$. The result then follows since $y^{m} = \Delta$. 
        \item[(R4):] If $\epsilon_{1} = \epsilon_{2}$, then $x^{\epsilon_{1}}vy^{b}\Delta^{\epsilon_{2}} = x^{\epsilon_{1}}vy^{b}x^{2\epsilon_{1}}  \LR x^{\epsilon_{1} -2\epsilon_{1}}vy^{b} =_{G} x^{-\epsilon_{1}}vy^{b}$. Otherwise, if $\epsilon_{1} = -\epsilon_{2}$, then $x^{\epsilon_{1}}vy^{b}\Delta^{\epsilon_{2}}= x^{\epsilon_{1}}vy^{b}x^{-2\epsilon_{1}} =_{G} x^{-\epsilon_{1}}vy^{b}$. (R5) follows a similar proof.
    \end{enumerate}
\end{proof}
\begin{proof}[Proof of Proposition \ref{prop:rewrite}]
Assume $u \not \in \overline{(3)}$. If $u$ contains only $x$ or $y$ factors, then we obtain $u' \in \{\varepsilon, x^{\pm 1}, y^{\pm 1}\}$, such that $u \sim_{\phi} u'$, based on the conditions given in Lemma \ref{cor: odd easy case}. 

We can therefore assume $u$ contains both $x$ and $y$ factors, and so $u = u_{1}\Delta^{c}$, where $u_{1} \in (3)$ is Garside free, and $c \in \Z$. We apply the following sequence of rewrite rules, to obtain an element $u' \in \overline{(3)}$:
\begin{enumerate}
    \item If $c \neq 0$, apply (R1) to obtain an element of the form $u_1$ or $u_{1}\Delta$. 
    \item If $u_{1}$ starts and ends with the same letter, then repeatedly apply (R2)-(R3), to obtain an element of the form $u_{1}$ or $u_{1}\Delta^{\pm 1}$, where $u_{1} \in \overline{(3)}$. If, after any application of (R2)-(R3), we obtain an element of the form $u_{1}\Delta^{\pm 2}$, then return to Step 1 and apply (R1). 
    \item If, after Step 2, we obtain $u_{1}\Delta^{\pm 1}$ where $u_{1} \in \overline{(3)}$, then apply (R4) or (R5) to obtain an element $u_{1} \in \overline{(3)}$.
\end{enumerate}
By Proposition \ref{prop:odd rules invariant}, $u$ is twisted conjugate to the output $u'$ of this sequence of rewrite rules. Since (R1)-(R5) either preserves or decreases word length, then $l(u') \leq l(u)$.
\end{proof}
\subsection{Twisted cyclic geodesics}
We now consider which elements of Type $\overline{(3)}$, as defined in Definition \ref{defn:odd Type 3}, are minimal length up to twisted cyclic permutations.

\begin{defi}   
    Let $G = G(m)$ be a group with presentation as in \cref{eqn: m odd}. We define the set of \emph{twisted cyclic geodesics}, denoted $\mathsf{CycGeo}_{\phi}(G,X)$, to be the set of all geodesics $u \in X^{\ast}$, such that every $\phi$-cyclic permutation of $u$ is also geodesic.
\end{defi}
Let $\mathsf{CycGeo}_{\phi, \overline{(3)}}(G,X) = \mathsf{CycGeo}_{\phi}(G,X) \cap \; \text{Type} \; \overline{(3)}$ be the set of all twisted cyclic geodesics of Type $\overline{(3)}$. The following example highlights that when we apply a $\phi$-cyclic permutation to $u \in \overline{(3)}$, we can obtain a word which is no longer geodesic.
\begin{exa}
    Suppose $m = 5$, and consider $u = y^{-3}x^{-1} \in X^{\ast}$, which is geodesic of length 4. We can take a twisted cyclic permutation of $u$ by applying the $\phi$-cyclic permutation $u = y^{-3}x^{-1} \LR \phi\left(x^{-1}\right)y^{-3} = xy^{-3} = u'.$ Here $u'$ is not geodesic, since $u' =_{G} x^{-1}y^{2}$, which is of length 3.
\end{exa}
To consider when words of Type $\overline{(3)}$ lie in $\mathsf{CycGeo}_{\phi, \overline{(3)}}(G,X)$, we first consider when words of Type (3) are geodesic.
\begin{defi}
For $u = x^{a_{1}}y^{b_{1}}\dots x^{a_{\tau}}y^{b_{\tau}} \in (3)$, define
\begin{alignat*}{2}
    \Pos_{x}(u) &:= \mathrm{max}\{a_{i} \; : \; a_{i} \geq 0, 1 \leq i \leq \tau \}, \quad \Neg_{x}(u) &&:= \mathrm{max}\{-a_{i} \; : \; a_{i} \leq 0, 1 \leq i \leq \tau \}, \\
    \Pos_{y}(u) &:= \mathrm{max}\{b_{i} \; : \; b_{i} \geq 0, 1 \leq i \leq \tau \}, \quad \Neg_{y}(u) &&:= \mathrm{max}\{-b_{i} \; : \; b_{i} \leq 0, 1 \leq i \leq \tau \}.
\end{alignat*} 
\end{defi}

\begin{prop}\label{prop:geods}\cite[Proposition 3.5]{Fujii2018}
Let $u \in (3)$ be geodesic. Then the following conditions must all be satisfied:
\begin{multicols}{2}
    \begin{enumerate}
    \item $\Pos_{x}(u) + \Neg_{x}(u) \leq 2$.
    \item $\Pos_{y}(u) + \Neg_{y}(u) \leq 2k+1$.
    \item $\Pos_{x}(u) + \Neg_{y}(u) \leq k+1$.
    \item $\Pos_{y}(u) + \Neg_{x}(u) \leq k+1$.
\end{enumerate}
\end{multicols}
\end{prop}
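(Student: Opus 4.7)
The plan is to prove all four conditions by contrapositive: assuming $u$ is of Type (3) and one inequality fails, I exhibit a word representing the same group element that is strictly shorter, contradicting geodesicity. The main tool is the centrality of the Garside element $\Delta = x^2 = y^{m}$, which gives us four "slide" moves: for any two positions $i, j$ in the alternating expression $u = x^{a_1}y^{b_1}\cdots x^{a_\tau}y^{b_\tau}$, we can simultaneously decrease $|a_i|$ or $|b_i|$ by trading two $x$'s (or $m$ $y$'s) at one position for two $x$'s (or $m$ $y$'s) at another, possibly converting between $x^{\pm 2}$ and $y^{\pm m}$ using $x^2 = y^m$. Concretely, $x^{a_i} = x^{a_i - 2}\Delta$, and because $\Delta$ is central we may transport it to combine with another $x$-block (giving $x^{a_j + 2}$), another $y$-block (giving $y^{b_j + m}$), or to cancel with an inserted $\Delta^{-1}$.

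For Condition 1, if $\Pos_x(u) + \Neg_x(u) \geq 3$, some positions $i, j$ satisfy $a_i \geq 2, a_j \leq -1$ (or symmetrically), and the $x$-$x$ slide yields a total length change of at most $-2$. Condition 2 is handled identically with the $y$-$y$ slide: if $\Pos_y(u) + \Neg_y(u) = p + q \geq m+1$, the length change is $2m - 2(p+q) < 0$. Both cases are straightforward and symmetric.

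The substantive work is Conditions 3 and 4, which are symmetric to each other under $x \leftrightarrow y^{(m-1)/2}$-type exchange, so I would prove only Condition 3. Suppose $a_i = p \geq 1$, $b_j = -q \leq -1$, and $p + q \geq k+2$. The idea is to use a mixed slide converting an $x^{\pm 2}$ at position $i$ into a $y^{\pm m}$ that merges with position $j$ (or vice versa). When $p \geq 2$, sending $x^{a_i} \mapsto x^{a_i - 2}$ and $y^{b_j} \mapsto y^{b_j + m}$ changes length by $m - 2q - 2$, which is negative precisely when $q \geq k$; since $p + q \geq k+2$ and $\Pos_x(u) \leq 2$ by Condition 1 (already proved), the case $p \geq 2$ forces $q \geq k$. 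When $p = 1$, instead send $y^{b_j} \mapsto y^{b_j + m}$ and $x^{a_i} \mapsto x^{a_i - 2} = x^{-1}$, giving length change $m - 2q$, negative when $q \geq k+1$; and indeed $p = 1$ and $p + q \geq k+2$ forces $q \geq k+1$. Either subcase produces a strictly shorter word representing the same element.

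The main obstacle is the bookkeeping in Step 3: one must verify that after the mixed slide the resulting expression is genuinely a word over $X$ of the stated length (as opposed to having introduced additional alternations that could, say, be absorbed only after further simplification), and one must use the already-established Condition 1 to control $\Pos_x(u)$ and split into the $p=1$ and $p \geq 2$ subcases cleanly. The cases $p = 0$ or $q = 0$ are trivial (the inequality reduces to Condition 2 applied to a single side), so the interesting content lies at the boundary $p + q = k+2$, where one of $q \geq k$ or $q \geq k+1$ must be invoked according to the value of $p$. Condition 4 follows by the same argument with the roles of signs interchanged.
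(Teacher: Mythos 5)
The paper does not prove this proposition; it is cited directly to Fujii \cite[Proposition 3.5]{Fujii2018} and used as a black box, so there is no internal proof to compare against. Judged on its own terms, your contrapositive-via-slides strategy is the natural one and most of the length bookkeeping is correct, but there are two genuine gaps.

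First, every slide move you describe needs two positions with exponents of opposite sign, and the failure of an inequality does not by itself furnish both. For Condition 1, if all $a_i\geq 0$ and $\Pos_x(u)\geq 3$ there is no $a_j\leq -1$, so no $x$--$x$ slide exists; similarly for Condition 2 when $\Neg_y(u)=0$. In those one-sided situations a different rewrite is needed (e.g.\ $y^{b}\mapsto y^{b-m}\Delta$ when $b\geq m+1$), and, more to the point, this is exactly where the hypothesis $u\in(3)$ is essential and your argument never invokes it. Indeed, for $m=5$ the word $x^{3}y^{-1}$ is an alternating geodesic (length $4$, by the abelianisation bound) with $\Pos_x=3$ and $\Neg_x=0$, violating Condition 1; it does not contradict the proposition because its geodesic form is $xy^{-1}\Delta$, which is Type $(1)$, not Type $(3)$. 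A proof that ignores the Type $(3)$ hypothesis cannot be complete, since without it the statement is false.

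Second, the dismissal of $p=0$ in Condition 3 as ``Condition 2 applied to a single side'' does not hold. If $\Pos_x(u)=0$, Condition 3 asserts $\Neg_y(u)\leq k+1$, whereas Condition 2 only gives $\Neg_y(u)\leq 2k+1$ when $\Pos_y(u)=0$; the two are not equivalent. That subcase needs a direct argument (replacing $y^{-q}$, $q\geq k+2$, by $y^{m-q}\Delta^{-1}$ still yields a shorter word of total length $l(u)+(m-2q)+2<l(u)$, but one must say so). The $q=0$ subcase is fine once Condition 1 is available, since $\Pos_x\leq 2\leq k+1$. The $p\in\{1,2\}$ cases of Condition 3 and the symmetric treatment of Condition 4 are correct as computed.
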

Suppose $u \in \overline{(3)}$ is geodesic and $u \LR u'$ for some $u' \in X^{\ast}$. We want to establish when $u'$ is geodesic, and hence $u \in \mathsf{CycGeo}_{\phi, \overline{(3)}}(G,X)$. Let $u = x^{a_{1}}y^{b_{1}}\dots x^{a_{\tau}}y^{b_{\tau}} \in \overline{(3)}$ and consider a $\phi$-cyclic permutation of the form
\[ u' = x^{-a_{i}}y^{-b_{i}}\dots x^{-a_{\tau}}y^{-b_{\tau}}x^{a_{1}}y^{b_{1}}\dots x^{a_{i-1}}y^{b_{i-1}},
\]
for some $1 \leq i \leq \tau$. We consider each of the 4 conditions from Proposition \ref{prop:geods}, when applied to $u'$. 

The first condition is immediate, since $\Pos_{x}(w) \leq 1$ and $\Neg_{x}(w) \leq 1$ for all words $w \in X^{\ast}$. 
Since $u'$ contains at least one $x^{\pm 1}$ term, then by Conditions 3 and 4, there cannot be any $y^{\pm (k+1)}$ terms in $u'$. Condition 2 always holds with this restriction. This immediately implies that for $u'$ to be geodesic, then $u$ cannot be of Type $(3^{0+}N)$ or $(3^{0-}N)$ (recall \cref{tab:geo forms odd}), since both of these require the existence of $y^{\pm (k+1)}$ terms. This allows us to fully classify twisted cyclic geodesics of Type $\overline{(3)}$.

\begin{cor}\label{prop:cyc geos}   
    Let $\left(3^{++}\right) = \left(3^{+}\right) \cup \left(3^{0+}U\right)$, and let $\left(3^{--}\right) = \left(3^{-}\right) \cup \left(3^{0-}U\right)$. Then 
    \[ \mathsf{CycGeo}_{\phi, \overline{(3)}}(G,X) = \{ u \in \left(3^{++}\right) \cup \left(3^{--}\right) \cup \left(3^{0*}\right) \mid u \; \text{starts and ends with opposite letters} \}.
    \]
    In particular, if $u = x^{a_{1}}y^{b_{1}}\dots x^{a_{\tau}}y^{b_{\tau}} \in \mathsf{CycGeo}_{\phi, \overline{(3)}}(G,X)$, then $u$ satisfies the following conditions:
    \begin{enumerate}
        \item[(i)] $a_{1} = 0$ if and only if $b_{\tau} = 0$, 
        \item[(ii)] $-k \leq b_{i} \leq k \; (1 \leq i \leq \tau), \; b_{i} \neq 0 \; (1 \leq i \leq \tau-1)$, and
        \item[(iii)] $\begin{cases}
            0 \leq a_{i} \leq 1 \; (1 \leq i \leq \tau),  & a_{i} \neq 0 \; (2 \leq i \leq \tau), \; u \in (3^{++}), \\
            -1 \leq a_{i} \leq 0 \; (1 \leq i \leq \tau),  & a_{i} \neq 0 \; (2 \leq i \leq \tau), \; u \in (3^{--}), \\
             -1 \leq a_{i} \leq 1 \; (1 \leq i \leq \tau),  & a_{i} \neq 0 \; (2 \leq i \leq \tau), \; u \in (3^{0*}).
        \end{cases}$
    \end{enumerate}
\end{cor}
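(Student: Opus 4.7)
The plan is to combine the analysis in the paragraph immediately preceding the corollary with a careful reading of the Type $(3)$ normal form, and then verify the converse direction. The preceding discussion already does most of the conceptual work: for any candidate $u \in \overline{(3)}$, applying \cref{prop:geods} to every $\phi$-cyclic permutation $u'$ forces no $y^{\pm(k+1)}$ factor to appear in $u$, which eliminates Types $(3^{0+}N)$ and $(3^{0-}N)$. The surviving sub-types inside $\overline{(3)}$ are therefore exactly $(3^+), (3^{0+}U), (3^-), (3^{0-}U)$, and $(3^{0*})$, which regroup as $(3^{++}) \cup (3^{--}) \cup (3^{0*})$, giving the set-theoretic description in the corollary.

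Next, I would transcribe the three numbered conditions from this classification. Condition (i) is the defining property of $\overline{(3)}$ (starting and ending with opposite factors) rewritten in the normal form $u = x^{a_1}y^{b_1}\cdots x^{a_\tau}y^{b_\tau}$: $u$ starts with $y$ iff $a_1 = 0$, and $u$ ends with $x$ iff $b_\tau = 0$, so opposite-factor endpoints translates exactly to $a_1 = 0 \iff b_\tau = 0$. Condition (ii) reads off the $y$-exponent bound $|b_i| \le k$ from the $y^{\pm(k+1)}$-exclusion, combined with the usual alternation requirement of Type $(3)$ that interior $y^{b_i}$-blocks are nonzero (otherwise two $x$-blocks would merge). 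Condition (iii) is the defining sign restriction of the relevant sub-type: all nonzero $a_i$ positive in $(3^{++})$, all negative in $(3^{--})$, or both signs present in $(3^{0*})$, together with the analogous alternation condition $a_i \ne 0$ for $2 \le i \le \tau$.

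To complete the proof, I would establish the converse inclusion: any $u$ of one of these three sub-types satisfying (i)-(iii) actually lies in $\mathsf{CycGeo}_{\phi, \overline{(3)}}(G,X)$. Given an arbitrary $\phi$-cyclic permutation $u'$ of $u$, I would verify all four conditions of \cref{prop:geods} for $u'$. Condition 1 follows immediately from $|a_i| \le 1$ (valid in each sub-type), Condition 2 from $|b_i| \le k \le 2k+1$, and Conditions 3 and 4 from the combined bound $\Pos_x(u') + \Neg_y(u') \le 1 + k = k+1$ together with its symmetric counterpart. These bounds are preserved under $\phi$-cyclic permutations because such permutations only invert signs of a contiguous prefix or suffix, which does not change the absolute values $|a_i|, |b_i|$ of individual factors.

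The main obstacle is the bookkeeping in the $(3^{0*})$ case: since both $x^{+1}$ and $x^{-1}$ already appear in $u$, a $\phi$-cyclic permutation can, in principle, bring opposite-signed $x$-factors together with a large $|y^{b_i}|$-block and threaten Conditions 3 or 4. The key observation resolving this is that for any word $w$ with all individual exponents satisfying $|a_i| \le 1$ and $|b_i| \le k$, the quantities $\Pos_x(w), \Neg_x(w) \le 1$ and $\Pos_y(w), \Neg_y(w) \le k$ hold regardless of sign distribution, so the geodesic inequalities reduce to uniform a priori bounds. This reduction, once made explicit, closes the converse direction cleanly.
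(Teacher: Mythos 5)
Your proposal follows the same route as the paper's: you apply \cref{prop:geods} to arbitrary $\phi$-cyclic permutations to rule out any $y^{\pm(k+1)}$ factor, which excludes the types $(3^{0+}N)$ and $(3^{0-}N)$, and then read off the remaining subtypes and their exponent constraints --- this is exactly the discussion preceding the corollary. The one thing to keep in mind is that \cref{prop:geods} is stated only as a necessary condition for geodesicity, so in the converse direction you are not really invoking those four inequalities for sufficiency but rather the (stronger) classification in \cref{tab:geo forms odd}: any alternating word with $|a_i|\le 1$, $|b_i|\le k$, and nonzero interior exponents already matches one of the listed geodesic forms $(3^{\pm})$, $(3^{0\pm}U)$, $(3^{0*})$, which is precisely what your exponent-bound bookkeeping verifies.
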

For notation, we let $\mathsf{CycGeo}_{\phi, \overline{(3)}} = \mathsf{CycGeo}_{\phi, \overline{(3)}}(G,X)$. 
\begin{lem}\label{cor:odd cyc geo obtained}
    Let $u \in \overline{(3)}$. Then there exists $u' \in \mathsf{CycGeo}_{\phi, \overline{(3)}}$ such that $u \sim_{\phi} u'$ and $l(u) \geq l(u')$. 
\end{lem}
\begin{proof}
    By the previous discussion, we only need to consider words $u \in \overline{(3)}$ which contain any $y^{\pm (k+1)}$ terms. For $\epsilon \in \{ \pm 1 \}$, we can first rewrite all pairs $\left(y^{\e(k+1)}, y^{-\epsilon(k+1)}\right)$ as $\left(y^{-\epsilon k}, y^{\epsilon k}\right)$, until we are left only with $y^{k+1}$ terms or $y^{-(k+1)}$ terms (see Proposition \ref{prop:linear time geos}). Then, we can take a $\phi$-cyclic permutation, which switches all $y^{\epsilon(k+1)}$ terms to $y^{-\epsilon(k+1)}$, and then apply the following rewrite rule to obtain a word $u' \in \mathsf{CycGeo}_{\phi, \overline{(3)}}$:
    \begin{equation}\label{eqn:x y pair rewrite}
        sx^{\epsilon}ty^{-\epsilon(k+1)}z =_{G} sx^{2\epsilon}x^{-\epsilon}ty^{-\epsilon(2k+1)}y^{\epsilon k}z =_{G} sx^{-\epsilon}ty^{\epsilon k}z,
    \end{equation}
    where $s,t,z \in X^{\ast}$.
\end{proof}
We also have to consider when non-unique geodesics can occur in $\mathsf{CycGeo}_{\phi, \overline{(3)}}$. 
\begin{prop}\label{prop:non unique}\cite[Proposition 3.8]{Fujii2018}\\
    Let $u \in (3)$ be geodesic. Then $u$ is a non-unique geodesic representative if at least one of the following conditions is satisfied:
\begin{enumerate}
    \item There exists both $x$ and $x^{-1}$ terms in $u$, or
    \item $\Pos_{y}(u) + \Neg_{y}(u) = m$.
\end{enumerate}
\end{prop}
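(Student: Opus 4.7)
The plan is to produce, in each case, a second geodesic word $u' \neq u$ of the same length representing the same element of $G(m)$, by transposing blocks of $u$ via length-preserving identities that come from centrality of the Garside element $\Delta = x^2 = y^m$. Both transpositions simply permute existing blocks while keeping the multisets of $x$-exponents and $y$-exponents intact, so $\Pos_x, \Neg_x, \Pos_y, \Neg_y$ are unchanged; the four inequalities of \cref{prop:geods} therefore still hold, guaranteeing that $u'$ is geodesic.

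For condition (1), the key identity is $x^{\epsilon} y^b x^{-\epsilon} =_G x^{-\epsilon} y^b x^{\epsilon}$ for $\epsilon \in \{\pm 1\}$ and $b \in \Z$, which follows from $x^{-2\epsilon} = y^{-m\epsilon}$ together with centrality of $\Delta$; indeed, computing $(x^{\epsilon}y^b x^{-\epsilon})(x^{-\epsilon}y^b x^{\epsilon})^{-1}$ gives $x^{\epsilon} y^b y^{-m\epsilon} y^{-b} x^{\epsilon} = x^{\epsilon} x^{-2\epsilon} x^{\epsilon} = 1$. Writing $u = x^{a_1} y^{b_1} \cdots x^{a_\tau} y^{b_\tau}$ as in \cref{prop:geos}, where in this normal form the interior $a_i$ are nonzero, if both $x$ and $x^{-1}$ appear as factors then the sequence of nonzero $x$-exponents must contain a consecutive pair with opposite signs, say $a_i = \epsilon$ and $a_{i+1} = -\epsilon$. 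Applying the identity to the subword $x^{\epsilon} y^{b_i} x^{-\epsilon}$ produces the required $u' \neq u$ of the same length.

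For condition (2), set $P = \Pos_y(u)$ and $N = \Neg_y(u)$, so $P + N = m$, and pick one $y^P$-block and one $y^{-N}$-block occurring in $u$ (both nonzero, since otherwise $u$ would contain a $y^{\pm m}$ sub-block, which could be replaced by $x^{\pm 2}$ to strictly shorten the word, contradicting geodicity). From $y^P =_G y^{-N}\Delta$ and centrality of $\Delta$ we obtain the word-level identity $y^P \cdot t \cdot y^{-N} =_G y^{-N} \cdot t \cdot y^P$ for the intervening subword $t$; applying this between the two chosen blocks transposes them and yields the required $u' \neq u$ of the same length. Since $m$ is odd with $P, N > 0$ summing to $m$, we have $P \neq N$ so $u'$ genuinely differs from $u$ as a word.

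The only point requiring any verification in either case, already addressed in the opening paragraph, is that each swap preserves the four inequalities of \cref{prop:geods}; since this follows immediately from the fact that the multisets of block exponents are unchanged, I do not expect any real obstacle. The main conceptual content is isolating the two identities above and observing that in a normalized Type (3) geodesic containing both signs of $x$ there is always an adjacent pair of $x$-blocks to which the first identity applies.
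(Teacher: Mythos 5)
Your argument is correct, and since the paper cites this fact from Fujii \cite{Fujii2018} rather than proving it, you have in effect reconstructed the proof. Both of your witnessing identities are sound and match rewriting moves used elsewhere in the paper: the $x$-swap $x^{\epsilon}tx^{-\epsilon} =_{G} x^{-\epsilon}tx^{\epsilon}$ is exactly the move used in the paper's twisted-conjugacy arguments, and the $y$-swap $y^{P}ty^{-N} =_{G} y^{-N}ty^{P}$ when $P+N=m$ follows from $y^{P}=y^{-N}\Delta$ and centrality of $\Delta$ in the same way. The observations that a contiguous run of nonzero $x$-exponents containing both signs must have an adjacent sign change, and that $P\neq N$ since $m$ is odd, are the right details to spell out, and both are correct.

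One small logical wobble: in your opening paragraph you justify that $u'$ is geodesic by noting that the four inequalities of \cref{prop:geods} are preserved. But \cref{prop:geods} only asserts those inequalities as \emph{necessary} conditions for a Type (3) word to be geodesic, not sufficient ones, so satisfying them does not by itself guarantee geodesity. Fortunately you do not need this route at all: in each case $u'$ has the same length as $u$ and represents the same group element, so $l(u') = l(u) = |\pi(u)|_{X}$ and $u'$ is geodesic immediately. Indeed, for the purpose of showing $u$ is a non-unique geodesic you only need a \emph{distinct word of the same length} representing the same element, which your constructions deliver directly. I would replace the appeal to \cref{prop:geods} with this length argument; everything else stands.
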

\begin{cor}\label{rmk:unique geos}
    Let $u \in \mathsf{CycGeo}_{\phi, \overline{(3)}}$ be a non-unique geodesic. Then the only rewrite rule we can apply to $u$ is that of the form
    \begin{equation}\label{eqn: odd parity switch}
        sx^{\epsilon}tx^{-\epsilon}z =_{G} sx^{2\epsilon}x^{-\epsilon}tx^{-2\epsilon}x^{\epsilon}z  =_{G} sx^{-\epsilon}tx^{\epsilon}z,
    \end{equation}
    where $\epsilon \in \{ \pm 1 \}, s,t,z \in X^{\ast}$.
\end{cor}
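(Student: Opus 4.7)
The plan is to combine the non-uniqueness characterisation of \cref{prop:non unique} with the twisted cyclic geodesic conditions of \cref{prop:cyc geos} in order to eliminate condition (2) of \cref{prop:non unique}, leaving only the parity-switch rewrite of \cref{eqn: odd parity switch} as a possibility.

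First I would observe that since $u \in \mathsf{CycGeo}_{\phi, \overline{(3)}}$, condition (ii) of \cref{prop:cyc geos} forces $-k \leq b_i \leq k$ for every $y$-exponent $b_i$ of $u$. Consequently
\[
\Pos_y(u) + \Neg_y(u) \leq 2k < 2k+1 = m,
\]
so condition (2) of \cref{prop:non unique} cannot hold for $u$. Since $u$ is a non-unique geodesic, the remaining condition (1) must hold: both $x$ and $x^{-1}$ occur among the $x$-factors of $u$, and in particular $u \in (3^{0*})$ (recall \cref{tab:geo forms odd}).

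It remains to verify that the only rewrite producing another geodesic representative of $u$ is the parity switch of \cref{eqn: odd parity switch}. The key observation is that the sole relator in the presentation is $x^2 = y^m = \Delta$, and the two clauses of \cref{prop:non unique} correspond to the two qualitatively distinct ways of using it. Condition (2) requires a block of $y$-exponents summing to $\pm m$, which is unavailable by the bound above. The only remaining mechanism, corresponding to condition (1), is precisely the move recorded in \cref{eqn: odd parity switch}: insert $x^{2\epsilon}\cdot x^{-2\epsilon} = 1$, use centrality of $\Delta^{\epsilon} = x^{2\epsilon}$ to transport it across the intermediate block $t$, and then absorb $x^{2\epsilon}$ into an existing $x^{-\epsilon}$-factor at one end and $x^{-2\epsilon}$ into an $x^{\epsilon}$-factor at the other end, simultaneously flipping both signs.

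The main obstacle is justifying the previous paragraph rigorously, i.e.\ that condition (1) admits no other length-preserving move. I would handle this by inspecting the proof of \cref{prop:non unique} in \cite{Fujii2018} and reading it as an exhaustive classification, so that each witness of condition (1) necessarily produces a rewrite of the displayed form for some choice of $s,t,z \in X^{\ast}$ and $\epsilon \in \{\pm 1\}$. With that classification in hand, the corollary follows immediately from the elimination of condition (2) above.
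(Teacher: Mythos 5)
Your proposal is correct and follows essentially the same route as the paper: both use the bound $|b_i|\leq k$ from \cref{prop:cyc geos} to rule out condition (2) of \cref{prop:non unique}, and then conclude that condition (1) forces the rewrite to be the parity switch of \cref{eqn: odd parity switch}. Your last paragraph is a bit more explicit about the fact that this conclusion rests on reading Fujii's classification as exhaustive, which the paper leaves implicit, but the argument is the same.
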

\begin{proof}
    For any $y^{b}$ factor in $u$, we have that $|b| \leq k$ by Corollary \ref{prop:cyc geos}, and so Condition 2 of Proposition \ref{prop:non unique} is never satisfied. For the first condition, the only rewrite rule we can apply is \cref{eqn: odd parity switch}, for all pairs of $\left(x,x^{-1}\right)$ terms. 
\end{proof}
\begin{rem}\label{rmk:equality check}
    We note that if $u \in \mathsf{CycGeo}_{\phi, \overline{(3)}}$, then $u$ is a non-unique geodesic precisely when $u \in \left(3^{0*}\right)$. \cref{eqn: odd parity switch} allows us to check quickly if two geodesics $u,v \in \mathsf{CycGeo}_{\phi, \overline{(3)}}$ represent the same element. In particular, if $u = x^{a_{1}}y^{b_{1}}\dots x^{a_{\tau_{u}}}y^{b_{\tau_{u}}}, \; v = x^{\alpha_{1}}y^{\beta_{1}}\dots x^{\alpha_{\tau_{v}}}y^{\beta_{\tau_{v}}} \in \mathsf{CycGeo}_{\phi, \overline{(3)}}$, then $u=_{G} v$ if and only if $\tau_{u}=\tau_{v}$, $b_{i} = \beta_{i}$ for all $1 \leq i \le \tau_{u}$, and the sets $X_{u} = \{a_{1}, \dots, a_{\tau_{u}}\}$ and $X_{v} = \{\alpha_{1}, \dots, \alpha_{\tau_{u}}\}$ are equal. 
\end{rem}
 The following result highlights that if $u \in \mathsf{CycGeo}_{\phi, \overline{(3)}}$ is a non-unique geodesic, then any equivalent word $u'=_{G} u$ is also of minimal length up to twisted cyclic permutations.
\begin{lem}\label{lem:cyc geos}
    Let $u \in \mathsf{CycGeo}_{\phi, \overline{(3)}}$, and suppose $u =_{G} u'$ for some geodesic $u' \in X^{\ast}$. Then $u' \in \mathsf{CycGeo}_{\phi, \overline{(3)}}$.
\end{lem}

\begin{proof}
    If $u$ is uniquely geodesic we are done, so suppose $u$ is a non-unique geodesic. From Corollary \ref{rmk:unique geos}, we only need to consider rewriting of the form in \cref{eqn: odd parity switch}. Assume $u$ is of the form $u = sxtx^{-1}z$, for some $s,t,z \in X^{\ast}$, and $u' = sx^{-1}txz =_{G} u$. We now check if all $\phi$-cyclic permutations of $u'$ are geodesic. All $\phi$-cyclic permutations of $u'$ are equivalent to a $\phi$-cyclic permutation of $u$, which are geodesic by assumption, except for the following cases.
    \begin{enumerate}
        \item[(i)] $u' = sx^{-1}txz \LR x^{-1}\phi(z)sx^{-1}t = v_{1}$. The only possibility for $v_{1}$ to not be geodesic is if there exists a $y^{\beta}$ factor in either $\phi(z), s$ or $t$, where $\beta = \pm (k+1)$. This however is impossible since $u \in \mathsf{CycGeo}_{\phi, \overline{(3)}}$, so no $y^{\beta}$ terms in $s,t$, and $z$ can equal $y^{\pm (k+1)}$.
    \item[(ii)] $u' = sx^{-1}txz  \LR txz\phi(s)x = v_{2}$. A similar argument to (i) shows $v_{2}$ must also be geodesic.
    \end{enumerate}
    A symmetric argument holds in the case where $u = sx^{-1}txz$. 
\end{proof}
With this set $ \mathsf{CycGeo}_{\phi, \overline{(3)}}$ of minimal length twisted conjugacy representatives, we can show that a finite sequence of $\phi$-cyclic permutations and equivalent geodesics exists between any two twisted conjugate elements which lie in $\mathsf{CycGeo}_{\phi, \overline{(3)}}$. 
\begin{thm}\label{thm:main}
    Let $u,v \in \mathsf{CycGeo}_{\phi, \overline{(3)}}$. Then $u \sim_{\phi} v$ if and only if $l(u) = l(v)$ and there exists a finite sequence $u = u_{0}, u_{1}, \dots, u_{s} = v$, such that $u_{i} \leftrightarrow u_{i+1}$ via a $\phi$-cyclic permutation or an equivalent geodesic, for all $0 \leq i \leq s-1$.
\end{thm}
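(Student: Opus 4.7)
The backward direction is immediate. Both $\phi$-cyclic permutations and geodesic equivalences preserve the twisted conjugacy class of the underlying group element, and both preserve word length (the former since $\phi$ is length-preserving, merely inverting each generator). Hence any chain $u = u_{0}, \ldots, u_{s} = v$ of such moves simultaneously witnesses $u \sim_{\phi} v$ and $l(u) = l(v)$.

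For the forward direction, assume $u \sim_{\phi} v$. The first task is to show $l(u) = l(v)$. By \cref{prop:rewrite} and \cref{cor:odd cyc geo obtained}, any geodesic in the twisted conjugacy class $C$ of $u$ and $v$ can be transformed, via a length-non-increasing sequence of $\phi$-cyclic permutations and geodesic equivalences, into an element of $\mathsf{CycGeo}_{\phi, \overline{(3)}} \cap C$. Applied to a minimum-length geodesic in $C$, this exhibits an element of $\mathsf{CycGeo}_{\phi, \overline{(3)}} \cap C$ of minimum length. Conversely, a direct inspection of (R1)-(R5) and of the rewriting in the proof of \cref{cor:odd cyc geo obtained} shows that none of these moves strictly decreases the length of an element of $\mathsf{CycGeo}_{\phi, \overline{(3)}}$, since such elements are Garside-free, start and end with opposite letters, and contain no $y^{\pm (k+1)}$ factors. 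Therefore both $u$ and $v$ already have minimum geodesic length in $C$, giving $l(u) = l(v)$.

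To build the connecting sequence, I would induct on $l(w)$, where $w \in X^{*}$ is a shortest word with $v =_{G} \phi(w)^{-1} u w$. The base case $l(w) = 0$ gives $u =_{G} v$, and \cref{rmk:equality check} then expresses the passage from $u$ to $v$ as a finite sequence of geodesic-equivalence swaps of the form \cref{eqn: odd parity switch}. For the inductive step, decompose $w = x w_{1}$ with $x \in X^{\pm 1}$, and set $u' := xux$; then $v =_{G} \phi(w_{1})^{-1} u' w_{1}$, so $u' \sim_{\phi} v$ via a word of length $l(w) - 1$. One then (i) connects $u$ to $u'$ by a single $\phi$-cyclic permutation, (ii) applies the rewriting of \cref{prop:rewrite} and \cref{cor:odd cyc geo obtained}, itself a sequence of admissible moves, to reach some $\widetilde{u}' \in \mathsf{CycGeo}_{\phi, \overline{(3)}}$ in the same class as $u'$, and (iii) invokes the inductive hypothesis on $\widetilde{u}'$ and $v$, concatenating the sequences.

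The main obstacle is step (i): the single-letter twisted conjugation $u \mapsto xux$ is directly a $\phi$-cyclic permutation only when $x^{-1}$ occurs as the first or last letter of $u$, in which case a suffix or prefix permutation realizes the move. To handle the general case, I would use the explicit description of $\mathsf{CycGeo}_{\phi, \overline{(3)}}$ in \cref{prop:cyc geos}, together with the non-uniqueness rule \cref{eqn: odd parity switch} (which stays within $\mathsf{CycGeo}_{\phi, \overline{(3)}}$ by \cref{lem:cyc geos}), to first reposition $x^{\pm 1}$ terms within $u$ to a boundary, and only then apply the $\phi$-cyclic permutation. Verifying that this rearrangement is always possible, and that the composition with the subsequent rewriting remains length-preserving and ultimately lands at $v$ rather than some other same-length element of $\mathsf{CycGeo}_{\phi, \overline{(3)}}$, is the technical heart of the argument.
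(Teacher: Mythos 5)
Your backward direction is fine, and your observation that any element of $\mathsf{CycGeo}_{\phi, \overline{(3)}}$ is of minimum geodesic length in its twisted conjugacy class (so that $l(u)=l(v)$) is a reasonable preliminary step. But the forward direction, which is the content of the theorem, has a genuine gap, and you acknowledge it yourself in your final paragraph: the induction on $l(w)$ is not set up so that it closes.

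Two concrete problems with the inductive step. First, after you replace $u' = zuz$ by some $\widetilde{u}' \in \mathsf{CycGeo}_{\phi, \overline{(3)}}$ via \cref{prop:rewrite} and \cref{cor:odd cyc geo obtained}, you have changed the group element (those moves act by $\sim_{\phi}$, not $=_{G}$). The word $w_1$ no longer twisted-conjugates $\widetilde{u}'$ to $v$, and there is no control ensuring that a shortest twisted-conjugator between $\widetilde{u}'$ and $v$ has length strictly less than $l(w)$; without that the induction does not terminate. Second, and independently, your repositioning plan in step (i) does not work for $y$-letters at all: the only equivalent-geodesic move available inside $\mathsf{CycGeo}_{\phi, \overline{(3)}}$ is the $x$-parity switch of \cref{eqn: odd parity switch} (\cref{rmk:unique geos}), which swaps the exponents of an $(x, x^{-1})$ pair in place; it neither moves an $x^{\pm 1}$ block to a new syllable position nor touches $y$-factors. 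So if $z = y^{\pm 1}$ and $z^{-1}$ is not already a boundary letter of $u$, you cannot turn $u \mapsto zuz$ into an admissible move by any rearrangement available. The paper avoids both issues by dispensing with induction entirely: it writes the twisted conjugator $w$ in geodesic normal form, splits into the cases $w = \Delta^{c}$, $w$ Garside-free, and $w$ of Type 1 or 2, and in each case tracks exactly how free cancellation and $\Delta$-cancellation act on $\mathsf{rev}(w)\,u\,w$. In the $\Delta^c$ case this is done with a multiset argument on $x$-exponents; the Garside-free case uses that both $u$ and $v$ begin and end with opposite factors, forcing free cancellation to happen on only one side; the mixed case combines the two. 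That structural analysis is what supplies the explicit sequence of $\phi$-cyclic permutations and geodesic equivalences, and it is the content your sketch is missing.
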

\begin{proof}
The reverse implication is clear by Definition \ref{defn: odd cyclic perms} and Lemma \ref{lem:cyc geos}. For the forward direction, assume $v =_{G} \phi(w)^{-1}uw = \mathsf{rev}(w)uw$ for some $w \in \X$. We assume $w$ is written in geodesic form, as defined in \cref{tab:geo forms odd}. We consider different cases for $w \in X^{\ast}$.
\par 
\textbf{Case 1: $w = \Delta^{c}$}.\\
Suppose $v =_{G} \mathsf{rev}(\Delta^{c})u\Delta^{c} =_{G} u\Delta^{2c}$. Without loss of generality, assume $c>0$. Since $v \in \mathsf{CycGeo}_{\phi, \overline{(3)}}$, $v$ is Garside free, so there exists free cancellation between $u$ and $\Delta^{2c}$. First, note that no cancellation of $\Delta^{2c}$ can occur with any $y$ factors in $u$. If this were the case, we'd have
\[ v =_{G} u\Delta^{2c} = sy^{b_{i}}t\Delta^{2c} =_{G} sy^{b_{i}+m}y^{-m}t\Delta^{2c} =_{G} sy^{b_{i}+m}t\Delta^{2c-1},
\]
for some $b_{i}$ where $-k \leq b_{i} < 0$, and some $s,t \in X^{\ast}$. Since $b_{i}+m \geq k+1$, and no $y^{\beta}$ term, where $|\beta| > k$, can occur in $v$ by Corollary \ref{prop:cyc geos}, then either this type of cancellation cannot occur, or we need to reverse this rewrite. Since $c>0$, the only way we can do this is through the existence of an $x^{-1}$ term in $u$. If this is the case, we can apply the following rewrite: 
\begin{equation}\label{eqn:odd proof seq}
    v =_{G} u\Delta^{2c} = sx^{-1}ty^{b_{i}}z\Delta^{2c} =_{G} sx^{-1}ty^{b_{i}+m}z\Delta^{2c-1} =_{G} sx^{-2}xty^{b_{i}}y^{m}z\Delta^{2c-1}  =_{G} sxty^{b_{i}}z\Delta^{2c-1},
\end{equation}
where $s,t,z \in X^{\ast}$. From \cref{eqn:odd proof seq}, we note that we can skip the middle step, and lower the power of the Garside element using the $x^{-1}$ term directly, whilst leaving all $y$ terms unchanged. Therefore, we can assume that no $y$ factors cancel with $\Delta^{2c}$, and that there exists $2c$ $x^{-1}$ terms in $u$, which cancel with $\Delta^{2c}$, to give a Garside free element as required.  
\par 
Let $u_{x} = \{a_{1}, \dots, a_{\tau}\}$ denote the multiset of exponents of $x$ factors in $u$. Similarly let $v_{x}$ be the multiset of exponents of $x$ factors in $v$. By the previous observation, there exists a (possibly non-unique) set $I = \{a_{i_{1}}, \dots, a_{i_{2c}}\} \subseteq u_{x}$, such that $v_{x} = \phi(I) \sqcup u_{x} \setminus I$, where $\phi(I) = \{-a_{i_{1}}, \dots, -a_{i_{2c}}\}$. In other words, we take a subset $I \subseteq u_{x}$, such that all exponents $a_{i} \in I$ correspond to $x^{-1}$ terms which cancel with $\Delta^{2c}$, by repeatedly applying $x^{-1}\Delta = x$. In particular, all values $a_{p} \in I$ equal -1, and all values $a_{q} \in \phi(I)$ equal 1 (for $i_{1} \leq p,q \leq i_{2c})$. We can write $I = I_{1} \sqcup I_{2}$ as two disjoint sets of equal size:
\[ I_{1} = \{a_{i_{1}}, \dots, a_{i_{c}}\}, \quad I_{2} = \{a_{i_{c+1}}, \dots, a_{i_{2c}}\}.
\]
Let $u =_{G} u_{1}u_{2}$ such that $u_{1}, u_{2} \in X^{\ast}$ are geodesics, such that all $x$ factors with exponents in $I_{1}$ lie in $u_{1}$, and similarly $x$ factors with exponents in $I_{2}$ lie in $u_{2}$. By applying $x^{-1}\Delta = x$ to all $x$ factors in $u$ with exponents in $I$, we can write $v$ in the form $v =_{G} u_{1}u_{2}\Delta^{2c} =_{G} v_{1}v_{2}$, for some geodesic words $v_{1}, v_{2} \in X^{\ast}$ such that $v_{1}v_{2} \in X^{\ast}$ is also geodesic. Note $u_{1}u_{2}, v_{1}v_{2} \in \mathsf{CycGeo}_{\phi, \overline{(3)}}$ by Lemma \ref{lem:cyc geos}. We now have 
\[ u =_{G} \underbrace{u_{1}}_{I_{1}}\underbrace{u_{2}}_{I_{2}}, \quad v =_{G} \underbrace{v_{1}}_{\phi(I_{1})}\underbrace{v_{2}}_{\phi(I_{2})}.
\]
Here $v_{1}$ contains all $x$ factors with exponents from $\phi(I_{1})$, that is all $x$ factors in $u_{1}$ with exponents in $I_{1}$, which have now been rewritten. Similarly $v_{2}$ contains all $x$ factors with exponents from $\phi(I_{2})$, that is all $x$ factors in $u_{2}$ with exponents in $I_{2}$, which have now been rewritten. In other words, $u_{1}$ is equal to $v_{1}$ as a word, up to applying $\phi$ to all $x$ factors with exponents in $I_{1}$, and similarly $u_{2}$ is equal to $v_{2}$ as a word, up to applying $\phi$ to all $x$ factors with exponents in $I_{2}$. We will denote this property by $u_{1} \cong_{1} v_{1}$ and $u_{2} \cong_{2} v_{2}$. 

We want to show that there exists a sequence from $v$ to $u$ of $\phi$-cyclic permutations and equivalent geodesics. First, we can apply the following $\phi$-cyclic permutation to $v_{1}v_{2}$:
\[ v =_{G} \underbrace{v_{1}}_{\phi(I_{1})}\underbrace{v_{2}}_{\phi(I_{2})} \LR \underbrace{\phi(v_{2})}_{I_{2}}\underbrace{v_{1}}_{\phi(I_{1})} = v'.
\]
Here $v'$ has $c$ $x^{-1}$ terms in $\phi(v_{2})$ corresponding to $I_{2}$, and $c$ $x$ terms in $v_{1}$ corresponding to $\phi(I_{1})$. These powers can be switched with each other, by repeatedly applying \cref{eqn: odd parity switch}. Equivalently, we can apply $\phi$ to all $x$ factors with exponents corresponding to $I_{2}$ and $\phi(I_{1})$. This gives us a word of the form $\overline{\phi(v_{2})}\overline{v_{1}}$, where $\phi(v_{2}) \cong_{2} \overline{\phi(v_{2})}$ and $v_{1} \cong_{1} \overline{v_{1}}$. This leaves us with
\[ v =_{G} \underbrace{v_{1}}_{\phi(I_{1})}\underbrace{v_{2}}_{\phi(I_{2})} \LR \underbrace{\phi(v_{2})}_{I_{2}}\underbrace{v_{1}}_{\phi(I_{1})} =_{G} \underbrace{\overline{\phi(v_{2})}}_{\phi(I_{2})}\underbrace{\overline{v_{1}}}_{I_{1}}.
\]
Again note $\overline{\phi(v_{2})}\overline{v_{1}} \in \mathsf{CycGeo}_{\phi, \overline{(3)}}$ by Lemma \ref{lem:cyc geos}. Finally, we can apply another $\phi$-cyclic permutation to obtain a word equivalent to $u$:
\[ v =_{G} \underbrace{v_{1}}_{\phi(I_{1})}\underbrace{v_{2}}_{\phi(I_{2})} \LR \underbrace{\phi(v_{2})}_{I_{2}}\underbrace{v_{1}}_{\phi(I_{1})} =_{G} \underbrace{\overline{\phi(v_{2})}}_{\phi(I_{2})}\underbrace{\overline{v_{1}}}_{I_{1}} \LR \underbrace{\overline{v_{1}}}_{I_{1}}\underbrace{\overline{v_{2}}}_{I_{2}} = u_{1}u_{2} =_{G} u
\]
The last equality holds by the following observation. Since $u_{1} \cong_{1} v_{1} \cong_{1} \overline{v_{1}}$, then $u_{1} = \overline{v_{1}}$, since $\phi$ is of order 2. Similarly since $\phi(v_{2}) \cong_{2} \overline{\phi(v_{2})}$, then $v_{2} \cong_{2} \overline{v_{2}}$. Hence $u_{2} \cong_{2} v_{2} \cong_{2} \overline{v_{2}}$, and so $u_{2} = \overline{v_{2}}$, which completes this case.
\par 
\textbf{Case 2}: $w$ is Garside free.\\
Suppose $v =_{G} \mathsf{rev}(w)uw$, where $w$ is Garside free. Since $v$ starts and ends with opposite factors, there exists free cancellation in $\mathsf{rev}(w)uw$. Moreover, this cancellation occurs either in $\mathsf{rev}(w)u$ or $uw$, but not both, since $u$ starts and ends with opposite factors. Assume free cancellation occurs in $uw$. We let $u =_{G} u_{1}u_{2}, \; w =_{G} u^{-1}_{2}z.$ Then we can write $v =_{G} \mathsf{rev}(z)\mathsf{rev}\left(u^{-1}_{2}\right)u_{1}z$. Since $u_{1}u_{2} \in \mathsf{CycGeo}_{\phi, \overline{(3)}}$ by Lemma \ref{lem:cyc geos}, the subword $\mathsf{rev}\left(u^{-1}_{2}\right)u_{1} = \phi(u_{2})u_{1}$ must be geodesic. Assuming $u_{2}$ is maximal, we can assume $z$ is empty, since $v$ starts and ends with opposite factors. Therefore $v =_{G} \mathsf{rev}\left(u^{-1}_{2}\right)u_{1}$ and we obtain our required sequence of $\phi$-cyclic permutations and equivalent geodesics as follows:
\[ v =_{G} \mathsf{rev}\left(u^{-1}_{2}\right)u_{1} \LR u_{1}\phi\left(\mathsf{rev}\left(u^{-1}_{2}\right)\right) = u_{1}u_{2} =_{G} u.
\]
A similar proof holds when cancellation occurs in $\mathsf{rev}(w)u$.
\par 
\textbf{Case 3}: All remaining cases.\\
Assume $w = w_{1}\Delta^{c}$ is a geodesic of Type 1 or 2, and so $v =_{G} \mathsf{rev}(w_{1})uw_{1}\Delta^{2c}$. Without loss of generality, assume $w$ is of Type 1, that is $c>0$ and all $x$ factors in $w_{1}$ have positive powers. In this situation, there can exist either free cancellation within $\mathsf{rev}(w_{1})uw_{1}$, or we can rewrite factors of $\mathsf{rev}(w_{1})uw_{1}$ with the Garside element. We first consider free cancellation in $\mathsf{rev}(w_{1})uw_{1}$. Similar to Case 2, we assume $u =_{G} u_{1}u_{2}, \; w_{1} =_{G} u^{-1}_{2}z,$ and hence
\[ v =_{G} \mathsf{rev}(z)\mathsf{rev}\left(u^{-1}_{2}\right)u_{1}z\Delta^{2c}.
\]
If $z$ is the empty word, then $v =_{G} \mathsf{rev}\left(u^{-1}_{2}\right)u_{1}\Delta^{2c}$. Here we can apply Case 1 to obtain a sequence of $\phi$-cyclic permutations and equivalent geodesics between $v$ and $\mathsf{rev}(u^{-1}_{2})u_{1}$. Then since
$\mathsf{rev}(u^{-1}_{2})u_{1} \LR u_{1}u_{2} =_{G} u$, we have that $u$ and $v$ are related by a sequence of $\phi$-cyclic permutations and equivalent geodesics. 
\par 
Otherwise, suppose $z$ is not empty. We use similar techniques to Case 1, by considering rewriting $x$-factors from $\mathsf{rev}(z)\mathsf{rev}\left(u^{-1}_{2}\right)u_{1}z$ with the Garside element, as follows. The only rewrite rule we can apply is \cref{eqn: odd parity switch}, however we note that all $x^{-1}$ terms which cancel with $\Delta^{2c}$ must come from the subword $\mathsf{rev}\left(u^{-1}_{2}\right)u_{1}$. This is because $w$ is of Type 1, and so all $x$ factors in $z$ have a positive exponent. After cancellation with the Garside element, we can rewrite $v$ as 
\[ v =_{G} \mathsf{rev}(z)\overline{\mathsf{rev}\left(u^{-1}_{2}\right)u_{1}}z,
\]
after $2c$ $x^{-1}$ terms from $\mathsf{rev}\left(u^{-1}_{2}\right)u_{1}$ have cancelled with the Garside element. Since $v$ starts and ends with opposite letters, we can assume $v =_{G} \overline{\mathsf{rev}\left(u^{-1}_{2}\right)u_{1}}$, and from here we can apply the same proof as Case 1. A similar proof holds when free cancellation occurs in $\mathsf{rev}(w_{1})u$, or when we rewrite factors of $\mathsf{rev}(w_{1})uw_{1}$. 
\end{proof}

\begin{prop}\label{prop:odd twisted conj}
    The twisted conjugacy problem $\mathrm{TCP}_{\phi}(G(m))$, where $m$ is odd, $m \geq 3$, with respect to $\phi \colon x \mapsto x^{-1}, y \mapsto y^{-1}$, is solvable.
\end{prop}

\begin{proof}
    A summary of our algorithm is as follows.

    \textbf{Input:}
    \begin{enumerate}
        \item[(i)] $m \in \Z_{\geq 3}$ which is odd.
        \item[(ii)] Words $u,v \in X^{\ast}$ representing group elements. 
        \item[(iii)] $\phi \colon x \mapsto x^{-1}, y \mapsto y^{-1} \in \mathrm{Aut}(G(m))$.
    \end{enumerate}
    \textbf{Step 1: Geodesic form}
    \begin{adjustwidth}{1.5cm}{}
    Write $u,v$ in geodesic forms $\overline{u}, \overline{v}$ respectively, given by \cref{tab:geo forms odd}. 
    \end{adjustwidth}
    \textbf{Step 2: Simple cases}
    \begin{adjustwidth}{1.5cm}{}
        If $\overline{u}$ or $\overline{v}$ is of the form $x^{a}$ or $y^{b}$, then apply Lemma \ref{cor: odd easy case} and Theorem \ref{thm:main} to determine if $\overline{u}$ and $\overline{v}$ are twisted conjugate. In particular, $\overline{u} \sim_{\phi} \overline{v}$ if and only if:
        \[  
        \begin{cases}
            \overline{u} = x^{a}, \; \overline{v} = x^{b}, & a,b \; \text{have the same parity}, \\
            \overline{u} = y^{t+mc}, \; \overline{v} = y^{s+md}, & \{c,t\}, \{d,s\} \; \text{both have opposite parity or same parity}, \\
            \overline{u} = x^{a}, \; \overline{v} = y^{t+mc}, & a \; \text{even}, \{c,t\} \; \text{have same parity}.
        \end{cases}
        \]
        Otherwise, we can assume $\overline{u}, \overline{v}$ contain at least one $x$ and $y$ factor. 
    \end{adjustwidth}
    \textbf{Step 3: Twisted cyclic geodesics}
    \begin{adjustwidth}{1.5cm}{}
    Apply $\phi$-cyclic permutations and geodesic reductions to obtain $\overline{u}_{\min}, \overline{v}_{\min} \in \mathsf{CycGeo}_{\phi, \overline{(3)}}$, such that $\overline{u} \sim_{\phi} \overline{u}_{\min}$ and $\overline{v} \sim_{\phi} \overline{v}_{\min}$ (existence follows from Proposition \ref{prop:rewrite} and Lemma \ref{cor:odd cyc geo obtained}). If $l(\overline{u}_{\min}) \neq l(\overline{v}_{\min})$, then \textbf{Output} = \texttt{\color{blue}False} (by Theorem \ref{thm:main}).
    \end{adjustwidth}
    \textbf{Step 4: Set of representatives}
    \begin{adjustwidth}{1.5cm}{}
    Compute all possible sequences of $\phi$-cyclic permutations and equivalent words from $\overline{u}_{\min}$, to get a finite set $\mathcal{D}$ of minimal length twisted conjugacy representatives. If $\overline{v}_{\min} \in \mathcal{D}$, then \textbf{Output} = \texttt{\color{blue}True}. Otherwise, \textbf{Output} = \texttt{\color{blue}False} (by Theorem \ref{thm:main}). 
    \end{adjustwidth}
\end{proof}

\begin{exa}\label{exmp}
    We illustrate Step 4 of our algorithm, by considering the group \\$G(3) = \langle x,y \: | \; x^{2} = y^{3} \rangle$, and the word $u = x^{-1}yx^{-1}y \in X^{\ast}$. Here $u \in \mathsf{CycGeo}_{\phi, \overline{(3)}}$ by Corollary \ref{prop:cyc geos}, and \cref{fig:finite set shifts DAm} gives the finite set $\mathcal{D}$ of words obtained by computing all possible $\phi$-cyclic permutations and equivalent words. Here the arrowed lines indicate $\phi$-cyclic permutations.
    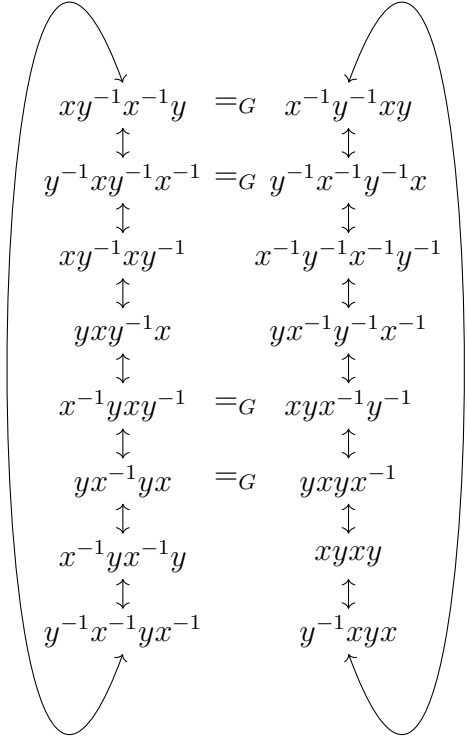
\begin{figure}[h]
        \centering
        \vspace{-125pt}
        \begin{tikzpicture}
            \node at (0,8) {$xy^{-1}x^{-1}y$};
            \node at (0,7) {$y^{-1}xy^{-1}x^{-1}$};
            \node at (0,6) {$xy^{-1}xy^{-1}$};
            \node at (0,5) {$yxy^{-1}x$};
            \node at (0,4) {$x^{-1}yxy^{-1}$};
            \node at (0,3) {$yx^{-1}yx$};
            \node at (0,2) {$x^{-1}yx^{-1}y$};
            \node at (0,1) {$y^{-1}x^{-1}yx^{-1}$};
            \node at (3,8) {$x^{-1}y^{-1}xy$};
            \node at (3,7) {$y^{-1}x^{-1}y^{-1}x$};
            \node at (3,6) {$x^{-1}y^{-1}x^{-1}y^{-1}$};
            \node at (3,5) {$yx^{-1}y^{-1}x^{-1}$};
            \node at (3,4) {$xyx^{-1}y^{-1}$};
            \node at (3,3) {$yxyx^{-1}$};
            \node at (3,2) {$xyxy$};
            \node at (3,1) {$y^{-1}xyx$};
            \node at (1.5,8) {$=_{G}$};
            \node at (1.5,7) {$=_{G}$};
            \node at (1.5,4) {$=_{G}$};
            \node at (1.5,3) {$=_{G}$};
            \draw[black, <->] (0,7.7) -- (0,7.3);
            \draw[black, <->] (0,6.7) -- (0,6.3);
            \draw[black, <->] (0,5.7) -- (0,5.3);
            \draw[black, <->] (0,4.7) -- (0,4.3);
            \draw[black, <->] (0,3.7) -- (0,3.3);
            \draw[black, <->] (0,2.7) -- (0,2.3);
            \draw[black, <->] (0,1.7) -- (0,1.3);
            \draw[black, <->] (3,7.7) -- (3,7.3);
            \draw[black, <->] (3,6.7) -- (3,6.3);
            \draw[black, <->] (3,5.7) -- (3,5.3);
            \draw[black, <->] (3,4.7) -- (3,4.3);
            \draw[black, <->] (3,3.7) -- (3,3.3);
            \draw[black, <->] (3,2.7) -- (3,2.3);
            \draw[black, <->] (3,1.7) -- (3,1.3);
            \draw[black, <->] (0, 0.7) to[out=-110,in=110, distance=6cm] (0, 8.3);
            \draw[black, <->] (3, 0.7) to[out=-70,in=70, distance=6cm] (3, 8.3);
            \end{tikzpicture}
            \vspace{-130pt}
        \caption{Set $\mathcal{D}$ obtained in Example \ref{exmp}.}
        \label{fig:finite set shifts DAm}
    \end{figure}
\end{exa}

\subsection{Complexity of algorithm}
We make some observations to improve our algorithm, which will lead to linear time complexity. We recall Example \ref{exmp} as a motivating example. Rather than computing all words of minimal length in $\mathcal{D}$, we could have instead computed all $\phi$-cyclic permutations from $u \in \mathsf{CycGeo}_{\phi, \overline{(3)}}$ only. Then, if we want to check if $v \in \mathcal{D}$, we simply check if $v$ is equal, as group elements, to a word $u' \in \mathsf{CycGeo}_{\phi, \overline{(3)}}$ obtained from $u$ via $\phi$-cyclic permutations. This step runs in linear time, based on the length of $u$, and we will go on to show that this idea can be applied in all cases. 

First, recall that for non-unique geodesics in $\mathsf{CycGeo}_{\phi, \overline{(3)}}$, the only rewrite which can occur is that of the form in \cref{eqn: odd parity switch}. This allows us to make the following observation.

\begin{prop}\label{prop:final}
    Let $u = x^{a_{1}}y^{b_{1}}\dots x^{a_{\tau}}y^{b_{\tau}}$, $v = x^{\alpha_{1}}y^{\beta_{1}}\dots x^{\alpha_{\tau}}y^{\beta_{\tau}} \in \mathsf{CycGeo}_{\phi, \overline{(3)}}$. If $u \sim_{\phi} v$, then $\left(\beta_{1}, \dots, \beta_{\tau}\right) = \left(-b_{i}, \dots, -b_{n}, b_{1}, \dots, b_{i-1}\right)$, for some $1 \leq i \leq \tau$.
\end{prop}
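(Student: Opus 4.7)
The plan is to combine \cref{thm:main} with a direct syllable-level analysis of the two allowed moves. Since $u \sim_{\phi} v$ with both $u, v \in \mathsf{CycGeo}_{\phi, \overline{(3)}}$, \cref{thm:main} furnishes a finite sequence $u = u_{0}, u_{1}, \dots, u_{s} = v$ in which each consecutive pair differs either by a $\phi$-cyclic permutation or by an equivalent-geodesic rewrite. I would induct on $s$: at each step I show the $y$-exponent sequence transforms by a signed cyclic shift of the claimed form, and then verify that such shifts are closed under composition.

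The equivalent-geodesic step is essentially free. By \cref{rmk:unique geos}, the only admissible rewrite between non-unique geodesics in $\mathsf{CycGeo}_{\phi, \overline{(3)}}$ is \cref{eqn: odd parity switch}, which toggles an $(x^{\epsilon}, x^{-\epsilon})$ pair and never touches the intervening or surrounding $y$-factors, so the $y$-sequence is preserved. For a $\phi$-cyclic permutation $u = u_{1}u_{2} \mapsto u_{2}\phi(u_{1})$, I would split into cases according to where the cut falls in the string $u$. When the cut lies at a syllable boundary between the $j$-th and $(j{+}1)$-st syllables, the resulting word has normal form $x^{a_{j+1}}y^{b_{j+1}}\dots x^{a_{\tau}}y^{b_{\tau}}x^{-a_{1}}y^{-b_{1}}\dots x^{-a_{j}}y^{-b_{j}}$, whose $y$-sequence is precisely $(b_{j+1}, \dots, b_{\tau}, -b_{1}, \dots, -b_{j})$, the claimed form. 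A cut inside an $x$-syllable leaves every $y^{b_{i}}$ intact, so the $y$-sequence is merely rotated without any sign change at the junction (a degenerate instance of the claimed form). A cut inside a $y$-syllable $y^{b_{j}} = y^{b_{j}'}y^{b_{j}''}$ produces $y^{b_{j}''}$ at the front and $y^{-b_{j}'}$ at the back; but because the endpoints $u$ and $v$ lie in $\mathsf{CycGeo}_{\phi, \overline{(3)}}$ and by \cref{prop:cyc geos} satisfy $|b_{i}| \leq k$, such cuts are either absorbed when the word is renormalised to its Type $\overline{(3)}$ form or correspond to a boundary cut on the normalised form, and in both cases the induced $y$-sequence matches the stated pattern.

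Finally I would verify that signed cyclic shifts of this shape are closed under composition: a shift at position $i$ followed by a shift at position $j$ of the resulting sequence equals a single shift at position $i+j-1 \pmod{\tau}$ of the original, as one checks by unwinding the definitions (the two ``negated prefixes'' combine into one, and the sign flips on their intersection cancel). Induction on $s$ then concludes the argument. The main technical obstacle is the bookkeeping for cuts falling inside a $y$-syllable: verifying that such cuts, after renormalising the output within $\mathsf{CycGeo}_{\phi, \overline{(3)}}$, still induce a shift of the stated form, and that the inductive composition across arbitrarily many steps never produces a $y$-sequence outside this class.
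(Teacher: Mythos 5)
Your high-level strategy matches the paper's: apply \cref{thm:main} to reduce to a chain of moves, note via \cref{rmk:unique geos} that equivalent-geodesic steps (\cref{eqn: odd parity switch}) never touch $y$-syllables, and track the $y$-exponent sequence through the $\phi$-cyclic permutations. The paper's own proof consists of exactly these two remarks; the composition bookkeeping you propose is extra detail the paper does not record.

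The trouble is that the bookkeeping, as you sketch it, does not close. The target shape $(-b_i, \dots, -b_\tau, b_1, \dots, b_{i-1})$ has the negated block at the \emph{front}, which is what a $\phi$-cyclic permutation of a \emph{suffix} produces. The boundary cut of a \emph{prefix} that you work out yields $(b_{j+1}, \dots, b_\tau, -b_1, \dots, -b_j)$, with the negated block at the \emph{back}; you call this ``the claimed form,'' but it is not in general equal to $(-b_i, \dots, -b_\tau, b_1, \dots, b_{i-1})$ for any $i$. These are two distinct families of signed cyclic shifts and composition mixes them: writing $T$ for the map $(b_1, \dots, b_\tau) \mapsto (b_2, \dots, b_\tau, -b_1)$, one has $T^{2\tau} = \mathrm{id}$, the prefix-type patterns are $T^{j}$ for $0 \le j \le \tau$ and the suffix-type ones are $T^{j}$ for $\tau \le j \le 2\tau - 1$, so neither family alone is closed under composition. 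Concretely, your rule ``shift at $i$ followed by shift at $j$ equals a shift at $i+j-1 \pmod{\tau}$'' fails already for $\tau = 3$, $i = j = 2$, $(b_1,b_2,b_3) = (1,2,3)$: the suffix-type shift at position $2$ gives $(-2,-3,1)$, applying it again gives $(3,-1,-2)$ (a prefix-type pattern, namely $T^{2}$), whereas your formula predicts the suffix-type $(-3,1,2)$. So the inductive hypothesis is not preserved by a single step. To repair the argument you would need to carry both orientations through the induction, in which case what the argument establishes is that $(\beta_1,\dots,\beta_\tau)$ is some power of $T$ applied to $(b_1,\dots,b_\tau)$; alternatively you would need a reason why only suffix-type cuts ever occur along the chain from \cref{thm:main}, which is doubtful since \cref{defn: odd cyclic perms} and the proof of \cref{thm:main} explicitly use both prefix and suffix permutations.
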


\begin{proof}
    By Theorem \ref{thm:main}, $u \sim_{\phi} v$ if and only if $u$ and $v$ are related by a sequence of $\phi$-cyclic permutations and equivalent geodesics. Since \cref{eqn: odd parity switch} does not affect any $y$ factors, we can assume $y$ factors are changed only by $\phi$-cyclic permutations. 
\end{proof}
Proposition \ref{prop:final} allows us to add the following information to Step 3 of our algorithm.

\textbf{Step 3: Twisted cyclic geodesics}
\begin{adjustwidth}{1.5cm}{}
Let $\overline{u}_{\min} = x^{a_{1}}y^{b_{1}}\dots x^{a_{\tau}}y^{b_{\tau}}$ and $\overline{v}_{\min} = x^{\alpha_{1}}y^{\beta_{1}}\dots x^{\alpha_{\tau}}y^{\beta_{\tau}}$. If \\ $(\beta_{1}, \dots, \beta_{\tau}) \neq (-b_{i}, \dots, -b_{\tau}, b_{1}, \dots, b_{i-1})$ for all $2 \leq i \leq \tau$, then \textbf{Output} = \texttt{\color{blue}False}.
\end{adjustwidth}
We can also adapt Step 4 of our algorithm, to find group elements of minimal length for each twisted conjugacy class, rather than words of minimal length. We find that the number of group elements of minimal length for each twisted conjugacy class is quadratic, with respect to word length.

\begin{prop}\label{prop:quadratic elements}
    Let $\overline{u}_{\min} \in \mathsf{CycGeo}_{\phi, \overline{(3)}}$, where $l(\overline{u}_{\min}) = n$. Then the number of minimal length group elements $v \in \mathsf{CycGeo}_{\phi, \overline{(3)}}$, such that $u \sim_{\phi} v$, is bounded above by $n(n+1)$. 
\end{prop}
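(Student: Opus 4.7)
The plan is to parameterize each minimal-length twisted conjugate $v \in \mathsf{CycGeo}_{\phi, \overline{(3)}}$ of $\overline{u}_{\min}$ by the pair (its $y$-exponent sequence, its $x$-multiset), bound each coordinate separately, and multiply. Write $\overline{u}_{\min} = x^{a_{1}}y^{b_{1}}\cdots x^{a_{\tau}}y^{b_{\tau}}$; since each atom contributes at least one letter, $\tau \leq n$. By \cref{thm:main}, any such $v$ is obtained from $\overline{u}_{\min}$ by a finite sequence of $\phi$-cyclic permutations together with $x$-rewrites of the form \cref{eqn: odd parity switch}. The $x$-rewrites only permute positions of $x^{\pm 1}$ factors, so they preserve both the $y$-exponent sequence and the $x$-multiset, and hence (by \cref{rmk:equality check}) the underlying group element itself. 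Consequently, as a group element $v$ is uniquely determined by the pair $\bigl((\beta_{1},\ldots,\beta_{\tau}),\, X_{v}\bigr)$.

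For the $y$-sequence count, I combine \cref{prop:final} with the symmetric `prefix-type' $\phi$-cyclic permutation from \cref{defn: odd cyclic perms} (which contributes the family $(b_{i},\ldots,b_{\tau},-b_{1},\ldots,-b_{i-1})$). The resulting list of attainable sequences is exactly the set of length-$\tau$ windows in the $2\tau$-periodic bi-infinite sequence
\[ \ldots,\, b_{1},\ldots,b_{\tau},\,-b_{1},\ldots,-b_{\tau},\,b_{1},\ldots,b_{\tau},\,\ldots, \]
and iterating $\phi$-cyclic permutations merely shifts the window — each single permutation acts as a fixed element of $\mathbb{Z}/2\tau$ on the window position — so no further $y$-sequences appear. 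This gives at most $2\tau \leq 2n$ possibilities. For the $x$-multiset with a fixed $y$-sequence, \cref{prop:cyc geos} forces $\alpha_{i} \in \{-1,+1\}$ for $i \geq 2$ and $\alpha_{1} \in \{-1,0,+1\}$, with $\alpha_{1}=0$ precisely when $\beta_{\tau}=0$. Hence $X_{v}$ is determined by the number of $+1$'s among the $\alpha_{i}$, giving at most $\tau + 1 \leq n+1$ possibilities. Multiplying the two bounds yields $2n(n+1)$.

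The main obstacle is the $y$-sequence step. \cref{prop:final} is stated only for suffix-type permutations $u_{1}u_{2}\mapsto\phi(u_{2})u_{1}$, so one must check separately that the prefix-type $u_{1}u_{2}\mapsto u_{2}\phi(u_{1})$ and arbitrary compositions do not enlarge the window set. The required verification is short: each prefix-type permutation at position $i$ acts as the shift by $i-1$ in $\mathbb{Z}/2\tau$ while each suffix-type permutation at position $i$ acts as the shift by $\tau + i - 1$, so every composition remains a shift in $\mathbb{Z}/2\tau$, and the attainable $y$-sequences are exactly the $2\tau$ windows described above.
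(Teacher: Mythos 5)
Your argument is correct in outline but takes a genuinely different route from the paper, and there is one point in the $y$-sequence step that deserves a flag.

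The paper's proof works directly with the block decomposition $\left(\#\, y^{b_{1}}\right)\cdots\left(\#\, y^{b_{m}}\right)$, counts $\phi$-cyclic permutations block by block, observes that each block-level permutation changes the number of $x$-terms by $\pm 1$, and totals $2(m+1)\sum_{i}|b_{i}|$ group elements. You instead parameterize each group element by the pair ($y$-exponent sequence, $x$-multiset) via \cref{rmk:equality check}, bound the two coordinates independently, and multiply. This gives $2\tau(\tau+1)$, which is typically a sharper intermediate bound than the paper's (e.g.\ when the $|b_{i}|$ are large compared to $\tau$), and in the worked \cref{exmp} with $n = 4$, $\tau = 2$ your bound of $12$ is exactly the size of $\mathcal{D}$ as group elements. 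So the factorization idea is a real improvement conceptually.

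The flag: you assert that every attainable $y$-sequence is a length-$\tau$ window of the $2\tau$-periodic sequence, with $\tau = \tau_{\overline{u}_{\min}}$ fixed throughout. But a single $\phi$-cyclic permutation can change whether the word starts with an $x$-letter or a $y$-letter, and by \cref{prop:cyc geos}(i) this toggles whether $a_{1}=0$ (equivalently $b_{\tau}=0$), so the formal block count $\tau$ of a word in $\mathsf{CycGeo}_{\phi, \overline{(3)}}$ shifts by one between the two cases (e.g.\ $xy^{b}$ has $\tau=1$ while its $\phi$-cyclic permutation $y^{b}x^{-1}$ has $\tau = 2$, with $y$-sequence $(b,0)$). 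So the attainable $y$-sequences do not all have the same length, and the clean ``set of length-$\tau$ windows'' description fails as stated. The same imprecision is already latent in \cref{prop:final}, which you are citing, so you are not introducing a new error; but to make the count watertight you should either (a) count only the $m = $ number of nonzero $y$-blocks, giving $2m$ cyclic windows, and then argue that across the two possible starting letters the number of realizable $x$-multisets per window is at most $m+1$ (this uses the parity observation that each block-level $\phi$-cyclic permutation flips the parity of the number of $x$-terms, which is exactly the observation the paper makes), or (b) crudely double your $y$-sequence count to $\leq 4\tau$ to absorb the two possible lengths, which still yields $\leq 4\tau(\tau+1)$ and, since $\tau \leq n/2$, still lands under $2n(n+1)$. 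Either patch is short; as written the chain of inequalities happens to give the right answer but the middle step as stated is not quite justified.
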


\begin{proof}
First recall from Remark \ref{rmk:equality check} that to check if two words $u,v \in \mathsf{CycGeo}_{\phi, \overline{(3)}}$ represent the same group element, then we only need to check that the exponents of $y$ factors of $u$ and $v$ match exactly, and that the number of $x$ terms in $u$ equals the number of $x$ terms in $v$ (or analogously, the number of $x^{-1}$ terms in $u$ equals the number of $x^{-1}$ terms in $v$). Combined with Theorem \ref{thm:main}, we can find unique representatives for each group element in a twisted conjugacy class as follows.

Let $\overline{u}_{\min} = x^{a_{1}}y^{b_{1}}\dots x^{a_{m}}y^{b_{m}}$, i.e. $l(\overline{u}_{\min}) = n = m+ \sum^{m}_{i} |b_{i}|$. We can write $\overline{u}_{\min}$ in a block decomposition of the form
\[ u = \left(\# y^{b_{1}}\right)\left(\# y^{b_{2}}\right)\dots \left(\# y^{b_{m}}\right),
\]
where each $\#$ represents an $x^{\pm 1}$ term. Note if $u$ starts with a $y$ factor, then we can apply a $\phi$-cyclic permutation to obtain the correct form. Suppose the number of $x$ terms in $u$ equals $q \leq m$. Consider the following $\phi$-cyclic permutation of $u$: 
\[ u' = \left(\# y^{-b_{m}}\right)\left(\# y^{b_{1}}\right)\dots \left(\# y^{b_{m-1}}\right).
\]
Here the number of $x$ terms in $u'$ must be equal to either $q-1$ or $q+1$, depending on whether we move an $x$ or $x^{-1}$ from the last block in $u$. In particular, each time we apply a $\phi$-cyclic permutation of a block, we either increase or decrease the number of $x$ terms by 1. Moreover, the number of $x$ terms in any element is bounded below by 0 and above by $m$. 

In total, we obtain $(m+1)(m+\sum^{m}_{i}|b_{i}|)$ group elements, by computing all $\phi$-cyclic permutations of $\overline{u}_{\min}$ using this block decomposition. To see this, we group together block decomposition's which only differ by one $\phi$-cyclic permutation of a block. These pairs have either odd or even number of $x$ terms, which gives us $m+1$ group elements. Summing across all $y$ powers, to include all possible $\phi$-cyclic permutations, we get $(m+1)(m+\sum^{m}_{i}|b_{i}|)$ group elements. Since $m < n$, this value has an upper bound of $n(n+1)$.  
\end{proof}
This already improves our algorithm to $\mathcal{O}(n^{3})$, in that at Step 4, we can compute all minimal length group elements which are twisted conjugate to $\overline{u}_{\min}$, and then check if $\overline{v}_{\min}$ represents one of these group elements or not. This can be improved further to obtain a linear time Step 4 of our algorithm as follows.

\textbf{Step 4: Checking group elements}
\begin{adjustwidth}{1.5cm}{}
For $w = x^{a_{1}}y^{b_{1}}\dots x^{a_{\tau}}y^{b_{\tau}} \in \mathsf{CycGeo}_{\phi, \overline{(3)}}$, define 
\[ X(w) := 
    \begin{cases}
        (0,2,4, \dots, \tau) & \# x\text{-terms in} \; u \; \text{is even}, \\
        (1,3,5, \dots, \tau-1) & \# x \text{-terms in} \;u \; \text{is odd},
    \end{cases}
    \]
    if $\tau$ is even, and define analogously for $m$ odd. We compute all $\phi$-cyclic permutations of $\overline{u}_{\min} \mathsf{CycGeo}_{\phi, \overline{(3)}}$ using the block decomposition, whilst keeping track of how many $x$ terms can exist for each element we obtain (for each $\phi$-cyclic permutation, we switch between odd and even). For each group element $w \in \mathsf{CycGeo}_{\phi, \overline{(3)}}$ obtained via $\phi$-cyclic permutations from $\overline{u}_{\min}$, we check whether the exponents of $y$ factors of $w$ and $\overline{v}_{\min}$ match exactly, and that $X(w) = X(\overline{v}_{\min})$. If this holds for some $w$, then \textbf{Output} = \texttt{\color{blue}True}. Otherwise, if no such $w$ exists, then \textbf{Output} = \texttt{\color{blue}False} . 
\end{adjustwidth}
\begin{exa}
    Recall Example \ref{exmp}. Here $m = 2$ and $(m+1)(m+\sum^{m}_{i=1}|b_{i}|) = 3(2+2) = 12$ from Proposition \ref{prop:quadratic elements}, and so there are 12 minimal length group elements within the set $\mathcal{D}$. We list them in \cref{tab:set Dv}, by taking the minimal shortlex representative for each group element (where $x<x^{-1}<y<y^{-1})$, and also recording the parity of the number of $x$-terms in each group element.
\begin{table}[h]
    \centering
    \begin{tabular}{|c|c||c|c|}
    \hline
        Group element & Parity of $x$ terms & Group element  & Parity of $x$ terms\\
        \hline \hline
         $xy^{-1}x^{-1}y$& Odd & $y^{-1}xy^{-1}x^{-1}$ & Odd \\
         $xy^{-1}xy^{-1}$& Even  & $x^{-1}y^{-1}x^{-1}y^{-1}$  & Even \\
         $yxy^{-1}x$& Even  & $yx^{-1}y^{-1}x^{-1}$  & Even \\
         $xyx^{-1}y^{-1}$& Odd  & $yxyx^{-1}$  & Odd \\
         $x^{-1}yx^{-1}y$& Even  & $xyxy$   & Even\\
         $y^{-1}x^{-1}yx^{-1}$& Even  & $y^{-1}xyx$  & Even \\
         \hline
    \end{tabular}
    \caption{Group elements from Example \ref{exmp}}
    \label{tab:set Dv}
\end{table}

We apply our adapted Step 4 to the words $\overline{u}_{\min} = xy^{-1}x^{-1}y$, $\overline{v}_{\min} = yx^{-1}y^{-1}x^{-1}$. Here $\overline{u}_{\min} \sim_{\phi} \overline{v}_{\min}$ (see \cref{fig:finite set shifts DAm}). After computing all possible $\phi$-cyclic permutations from $\overline{u}_{\min}$, we obtain 8 words:
\begin{align*}
    &xy^{-1}x^{-1}y \leftrightarrow y^{-1}xy^{-1}x^{-1} \leftrightarrow xy^{-1}xy^{-1} \leftrightarrow yxy^{-1}x \\
    &\leftrightarrow x^{-1}yxy^{-1} \leftrightarrow yx^{-1}yx \leftrightarrow x^{-1}yx^{-1}y \leftrightarrow y^{-1}x^{-1}yx^{-1}.
\end{align*}
Consider $w = yxy^{-1}x$ from this list. Then the $y$-exponents of $w$ and $\overline{v}_{\min}$ match exactly, and $X(w) = X(\overline{v}_{\min}) = \{0,2\}$, and so our algorithm will output \texttt{\color{blue}True} as required. Alternatively, consider $\overline{v}_{\min} = yxy^{-1}x^{-1}$. Again by \cref{fig:finite set shifts DAm}, $\overline{u}_{\min} \not \sim_{\phi} \overline{v}_{\min}$. Here $\{0, 2\} = X(w) \neq X(\overline{v}_{\min}) = \{1\}$, and since $w$ is the only word from this list with matching $y$-exponents to $\overline{v}_{\min}$, then our algorithm will output \texttt{\color{blue}False} as required.
\end{exa}
We are now able to determine linear complexity for our adapted algorithm.
\begin{prop}\label{cor:odd linear time}
    The $\mathrm{TCP}_{\phi}(G(m))$, where $m$ is odd, $m \geq 3$, with respect to $\phi \colon x \mapsto x^{-1}, y \mapsto y^{-1}$, is solvable in linear time.
\end{prop}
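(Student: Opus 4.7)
The plan is to show that each of the four steps of the algorithm in \cref{prop:odd twisted conj}, together with the refinements above, runs in time linear in $n := l(u) + l(v)$, and then sum the bounds. Step 1 is handled by the linear-time geodesic normalization procedure referenced earlier in the paper (see \cref{prop:linear time geos}); Step 2 is a parity check on exponents against \cref{cor: odd easy case} and costs constant time.

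For Step 3 I would argue that each application of (R1)--(R5) acts on a bounded region at the ends of the current word or on a single Garside factor, takes constant time, and either preserves or strictly decreases word length. Since length is a nonnegative integer bounded by $n$, only $O(n)$ applications occur, so the reduction into $\overline{(3)}$ is linear; the subsequent passage into $\mathsf{CycGeo}_{\phi, \overline{(3)}}$ via the pairing procedure behind \cref{eqn:x y pair rewrite} is a single left-to-right scan over the word. The length comparison $l(\overline{u}_{\min}) = l(\overline{v}_{\min})$ is trivial, and the additional check (forced by \cref{prop:final}) that the $y$-exponent sequence of $\overline{v}_{\min}$ is a negated cyclic rotation of that of $\overline{u}_{\min}$ is a standard cyclic pattern-matching problem, solvable in $O(n)$ by for instance the KMP algorithm applied to the doubled sequence.

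The main obstacle is Step 4, since naively enumerating all minimal-length twisted-conjugacy representatives gives $\Theta(n^2)$ words by \cref{prop:quadratic elements} and a matching count per candidate would push us to cubic time. The plan is to iterate \emph{once} through the $\tau \leq n$ block-level $\phi$-cyclic permutations of $\overline{u}_{\min}$, using the block decomposition from the proof of \cref{prop:quadratic elements}, and to update in $O(1)$ per step both the cyclic offset of the $y$-exponent vector and the parity of the $x$-count determining $X(\cdot)$, which simply flips each time a block is moved between ends. By \cref{rmk:equality check}, equality with $\overline{v}_{\min}$ at any given shift reduces to a constant-time $X$-parity comparison together with a $y$-exponent match; the latter, aggregated across all $\tau$ shifts by the same linear-time cyclic string-matching algorithm invoked above, takes $O(n)$ total. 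Summing the four bounds gives the claimed linear-time complexity.
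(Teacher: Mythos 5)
Your proof follows the same high‑level strategy as the paper — bound each of the four algorithmic steps from \cref{prop:odd twisted conj} and sum — and Steps 1 and 2 are treated just as the paper does (via \cref{prop:linear time geos} and \cref{cor: odd easy case}). The one place you genuinely add content is Steps 3–4. The paper asserts that ``Step 4 runs in linear time from the discussion above,'' but the adapted Step 4 as written — enumerate the block‑level $\phi$‑cyclic permutations of $\overline{u}_{\min}$ and, for each, compare $y$‑exponent tuples against $\overline{v}_{\min}$ — is $O(\tau)$ shifts times $O(\tau)$ per comparison if done naively, and likewise the added Step 3 check from \cref{prop:final} involves $O(\tau)$ candidate rotations. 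Your observation that both reduce to a single linear‑time cyclic string‑matching pass (negate and double the $y$‑exponent vector, run KMP/Z once, and track the $x$‑count parity in $O(1)$ per block move) supplies exactly the aggregation argument that the paper leaves implicit, and is what actually secures the $O(n)$ total. Two small alignments worth making: the block decomposition must be run in both orientations $\left(\# y^{b_i}\right)$ and $\left(y^{b_i}\#\right)$, as in the proof of \cref{prop:quadratic elements}, to cover prefix and suffix permutations; and (R1) is not a constant‑time local move — it reads the entire Garside factor, as the paper notes — but since it is invoked only a bounded number of times, your linear bound for Step 3 still holds.
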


\begin{proof}
Let $u,v \in X^{\ast}$ be our input words, where $|u| + |v| = n$. 
    Step 1 of our algorithm runs in linear time, by checking the conditions from \cref{tab:geo forms odd}, and applying the relations from $G(m)$ (see Proposition \ref{prop:linear time geos}). Linearity is also clear in Step 2.

    For Step 3, we need to analyse the complexity of applying the rewrite rules (R1)-(R5). (R1) takes at most $n-1$ time, by reading off the Garside element. To apply (R2) or (R3), we are checking the first and last terms from the free product and reducing by two each time. After all possible reductions, we have taken at most $n$ steps to do so. At this stage we may have to reapply (R1), but we only have to do this once. Finally for (R4) and (R5), we only read the first and last letters of the free product, and the Garside element of length one, so we are only reading three letters at this stage here. Overall this gives a complexity of $3n+1$. 

    Next we need to understand the complexity of finding a minimal length element in $\mathsf{CycGeo}_{\phi, \overline{(3)}}$, using Lemma \ref{cor:odd cyc geo obtained}. This only requires us to check for any $y^{\pm (k+1)}$ terms, and apply $\phi$-cyclic permutations and rewriting rules. The complexity of this step is bounded above by $2n$. 

    Finally for Step 4, we use the fact that there exists a quadratic time algorithm to check if two words are cyclic permutations of each other via a two-way deterministic pushdown automaton (2DPDA) \cite[Example 9.11]{aho_design_1974}, which can then be adapted to a linear time algorithm on a RAM machine \cite[Theorem 9.10]{aho_design_1974}. To check if $\overline{u}_{\min} \sim_{\phi} \overline{v}_{\min}$, we need to check that the $y$-terms are a $\phi$-cyclic permutation of each other, and that the differences in the $x$-terms differ by an even number. We therefore adapt our original algorithm on the 2DPDA as follows. 
    
    First, we adjust the input tape, so that we can determine if the $y$-terms are a $\phi$-cyclic permutation of each other. Then we add a counter which, when reading $x^{\pm 1}$ terms, keeps track of when they differ from the symbol on the pushdown list. The 2DPDA accepts a word if and only if the $y$-terms are a $\phi$-cyclic permutation of each other, and the counter is equal to $0 \Mod{2}$ after reading a word. This extra step of adding a counter to our 2DPDA can be achieved in constant time, and so Step 4 takes linear time overall on a RAM machine. 
\end{proof}
\comm{
\TCP

\begin{proof}
Let $u,v \in X^{\ast}$. Let $\psi \in \mathrm{Inn}(G(m))$, i.e. $\psi(w) = g^{-1}wg$ for some $g \in X^{\ast}$. By \cref{thm:odd autos}, we need to check two cases. 
\begin{enumerate}
    \item $u \sim_{\psi} v$. This case reduces to solving the conjugacy problem for $(gu, gv)$ (see \cref{rmk:outer autos enough}). The conjugacy problem in dihedral Artin groups is solvable in linear time by \cref{thm:conj prob decid DAm}.
    \item $u \sim_{\psi \circ \phi} v$, where $\phi \in \mathrm{Out}(G(m))$ is of the form in \cref{eqn: odd phi map}. This case reduces to solving the $\mathrm{TCP}_{\phi}(G(m))$ with respect to $\phi$ (see \cref{rmk:outer autos enough}). This is solvable in linear time by \cref{cor:odd linear time}.
\end{enumerate}
\end{proof}}

\section{Algorithm for the twisted conjugacy problem}\label{sec:full algorithm}
We now construct an algorithm to solve the $\mathrm{TCP}(G(m))$ for odd dihedral Artin groups. To achieve this, we first require a solution to another decision problem, known as the \emph{simultaneous conjugacy problem} (SCP).

\begin{defi}
    Let $G = \langle X \rangle$. For fixed $k \in \mathbb{N}$, the \emph{$k$-simultaneous conjugacy problem} takes as input two $k$-tuples $(y_{1}, \dots, y_{k}), (z_{1}, \dots, z_{k})$, where each $y_{i}, z_{i} \in X^{\ast}$, and decides whether there exists an element $g \in G$ such that $g^{-1}y_{i}g =_{G} z_{i}$ for all $i = 1, \dots, k$. We say this is an \emph{effective} solution if such an algorithm also produces a conjugator $g \in G$.
\end{defi}
This decision problem is also known as \emph{conjugacy for finite lists}, and some results on this decision problem can be found in \cite{kahrobaei_contracting_2024, kassabov_simultaneous_2012}. Note there exists finitely presented groups with solvable conjugacy problem and unsolvable SCP \cite{bridson_conjugacy_2005}. The following result will be useful later.

\begin{thm}\label{thm:MSCP DAM}
    The $k$-simultaneous conjugacy problem is solvable in dihedral Artin groups, for all $k \in \mathbb{N}$. Moreover, this solution is effective.
\end{thm}

\begin{proof}
    Let $G(m) = \langle X \rangle$, where $X$ is the free product generating set, and let $(y_{1}, \dots, y_{k})$, $(z_{1}, \dots, z_{k})$ be $k$-tuples where each $y_{i}, z_{i} \in X^{\ast}$. We first assume that each word $w \in X^{\ast}$ from these tuples is of the form $w = (w_{1}, w_{2})$, where $w_{1}$ is a free product normal form modulo the centre, and $w_{2}$ is a power of the central generator. 

    Let $y_{i} = (y_{i, 1}, y_{i, 2})$, $z_{i} = (z_{i, 1}, z_{i, 2})$ for some $1 \leq i \leq k$. By \cite[Proposition 3.1]{Holt2015}, checking if $y_{i}$ and $z_{i}$ are conjugate is equivalent to checking that $y_{i, 2} = z_{i, 2}$, and that $y_{i, 1}$ is a cyclic conjugate of $z_{i, 1}$. In particular, the SCP for $G(m)$ is equivalent to solving the SCP for $\Z_{2} \ast \Z_{m}$ when $m$ is odd, or $\Z_{m} \ast \Z$ when $m$ is even. In both cases, these are hyperbolic groups, which have solvable SCP by \cite[Theorem A]{bridson_conjugacy_2005}. Moreover, this solution is effective.
\end{proof}
The complexity for solving the SCP in hyperbolic groups was later improved in \cite[Theorem 1]{buckley_conjugacy_2013}. We will use these complexity estimates to determine the overall complexity of the twisted conjugacy problem in odd dihedral Artin groups.
\begin{prop}\label{prop:find g inner auto}
    Let $G(m) = \langle X \rangle$ be an odd dihedral Artin group. Let $\psi = \iota_{g} \phi \in \mathrm{Aut}(G(m))$. Then there exists an algorithm which takes as input $\psi \in \mathrm{Aut}(G(m))$, and determines the element $g \in G(m)$ defined by $\iota_{g} \in \mathrm{Inn}(G(m))$. 
    
    Moreover, the algorithm runs in time $\mathcal{O}(2\mu)$, where $\mu = 4\cdot \mathrm{max}\{1, l(\psi(x)), l(\psi(y))\}$. In particular, the algorithm runs in linear time based on the generators $x,y \in X$ and the automorphism $\psi \in \mathrm{Aut}(G(m))$ given.
\end{prop}

\begin{proof}
    By Theorem \ref{thm:odd autos}, we have two cases to consider. To check if $\phi \in [\psi]$ is trivial, we need to check if there exists $g \in G(m)$ such that $g^{-1}xg =_{G(m)} \psi(x)$ and $g^{-1}yg =_{G(m)} \psi(y)$. This is an example of the SCP in $G(m)$, for which we can determine the element $g \in G(m)$ by Theorem \ref{thm:MSCP DAM}. Otherwise, suppose $\phi \in \mathrm{Aut}(G(m))$ is of the form in \cref{eqn: odd phi map}. In this case, we need to check if there exists $g \in G(m)$ such that $g^{-1}\phi(x)g = g^{-1}x^{-1}g =_{G(m)} \psi(x)$, and $g^{-1}\phi(y)g = g^{-1}y^{-1}g =_{G(m)} \psi(y)$. Again the element $g \in G(m)$ can be determined using the SCP. 

    For complexity, the inputs of the SCP algorithm are $(x^{\e_{x}}, y^{\e_{y}}), (\psi(x), \psi(y))$, where $\e_{x}, \e_{y} \in \{ \pm 1 \}$. By \cite[Theorem 1]{buckley_conjugacy_2013}, the algorithm has complexity as stated, noting that we have to account for all 4 cases for the different values of $\e_{x}$ and $\e_{y}$.
\end{proof}

\TCP

\begin{proof}
Let $u,v \in X^{\ast}$, and let $\psi = \iota_{g}\phi \in \mathrm{Aut}(G(m))$ be our inputs. We want to decide whether there exists $w \in X^{\ast}$ such that $v =_{G(m)} \psi(w)^{-1}uw$. We will use Remark \ref{rmk:outer autos enough} to reduce the problem to twisted conjugacy with respect to outer automorphisms. Recall $\iota_{g} \in \mathrm{Inn}(G(m))$, i.e. $\iota_{g}(w) =_{G(m)} g^{-1}wg$ for some $g \in G(m)$, which can be determined in linear time by Proposition \ref{prop:find g inner auto}. We need to check two cases: 
\begin{enumerate}
    \item $u \sim_{\iota_{g}} v$: Here $v =_{G(m)} \iota_{g}(w)^{-1}uw = g^{-1}w^{-1}guw$. Rearranging gives $gv =_{G(m)} w^{-1}(gu)w$, and so this case reduces to solving the conjugacy problem with respect to $(gu, gv)$. Since $g$ is known, this is solvable in linear time by \cite[Proposition 3.1]{Holt2015}. 
    \item $u \sim_{\iota_{g} \phi} v$, where $\phi \in \mathrm{Aut}(G(m))$ is of the form in \cref{eqn: odd phi map}: Here $v =_{G(m)} g^{-1}\phi(w)^{-1}guw$. Rearranging gives $gv =_{G(m)} \phi(w)^{-1}(gu)w$, and so this case reduces to solving the twisted conjugacy problem $\mathrm{TCP}_{\phi}(G(m))$ with respect to $\phi$. This is solvable in linear time by Proposition \ref{cor:odd linear time}. 
\end{enumerate}
\end{proof}

\section{Conjugacy problem in extensions of $G(m)$}\label{sec:extension}
The aim of this section is to prove the following.
\orbit 
This will allow us to apply a criteria from \cite{bogopolski_orbit_2009} to find new examples of group extensions of odd dihedral Artin groups with solvable conjugacy problem. We recall this criteria here.
\begin{thm}\label{thm:orbit conditions}\cite[Theorem 3.1]{bogopolski_orbit_2009}
    Let 
    \[ 1 \rightarrow F \xrightarrow{\alpha} G \xrightarrow{\beta} H \rightarrow 1
    \]
    be an algorithmic short exact sequence of groups such that
    \begin{enumerate}
        \item[(i)] $F$ has solvable twisted conjugacy problem,
        \item[(ii)] $H$ has solvable conjugacy problem, and
        \item[(iii)] for every $1 \neq h \in H$, the subgroup $\langle h \rangle$ has finite index in its centralizer $C_{H}(h)$, and there is an algorithm which computes a finite set of coset representatives $z_{h,1}, \dots z_{h, t_{h}} \in H$, i.e.
        \[ C_{H}(h) = \langle h \rangle z_{h,1} \sqcup \dots \sqcup \langle h \rangle z_{h,t_{h}}.
        \]
        \end{enumerate}
        Then the conjugacy problem for $G$ is decidable if and only if the action subgroup \\ $A_{G} = \{\varphi_{g} \mid g \in G \} \leq \mathrm{Aut}(F)$ is orbit decidable.
\end{thm}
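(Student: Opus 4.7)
The plan is to prove the two directions of the biconditional by using the short exact sequence to reduce $\mathrm{CP}(G)$ to subproblems in $F$ and $H$, isolating orbit decidability as the missing ingredient beyond hypotheses (i)--(iii).

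For the forward direction, assume $\mathrm{CP}(G)$ is decidable. The key observation is that for $u,v \in F$ one has $u \sim_G v$ if and only if there exists $\varphi \in A_G$ with $\varphi(u) \sim_F v$: a witness $g^{-1}ug = v$ gives $\varphi_g(u) = v$, and conversely $\varphi_g(u) \sim_F v$ via $w \in F$ yields $(gw)^{-1}u(gw) = v$. Consequently, running the $\mathrm{CP}(G)$ algorithm on pairs from $F \times F$ decides orbit decidability of $A_G$.

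For the reverse direction, assume $A_G$ is orbit decidable. Given $g_1, g_2 \in G$, first apply $\mathrm{CP}(H)$ (condition (ii)) to $\beta(g_1)$ and $\beta(g_2)$, returning \texttt{False} if they are not conjugate in $H$. Otherwise compute $\eta \in H$ realizing the conjugacy, lift it to $k \in G$ using the algorithmic nature of the sequence, and replace $g_1$ by $k^{-1}g_1 k$, so that $\beta(g_1) = \beta(g_2) = h$. If $h = 1_H$, then $g_1, g_2 \in F$, and the equivalence from the forward direction, combined with $\mathrm{Inn}(F) \subseteq A_G$ so that $F$-conjugacy is absorbed into orbits, reduces $g_1 \sim_G g_2$ to a single orbit-decidability query.

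If $h \neq 1_H$, apply condition (iii) to obtain coset representatives $z_1, \dots, z_t$ of $\langle h\rangle$ in $C_H(h)$, and lift each to $\tilde{z}_j \in G$. Any conjugator $y$ with $y^{-1}g_1 y = g_2$ satisfies $\beta(y) \in C_H(h)$, and so $y = f_0 \tilde{z}_j g_2^n$ for some $f_0 \in F$, $n \in \Z$, and $j \in \{1,\dots,t\}$. The decisive observation is that $g_2^n$ commutes with $g_2$, so conjugating $y^{-1} g_1 y = g_2$ by $g_2^{-n}$ collapses the equation to $f_0^{-1}g_1 f_0 = \tilde{z}_j g_2 \tilde{z}_j^{-1}$, independent of $n$. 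Writing $g_1 = \alpha g_2$ and $\tilde{z}_j g_2 \tilde{z}_j^{-1} = \alpha_j g_2$ with $\alpha, \alpha_j \in F$, this becomes the twisted conjugacy relation $\alpha \sim_{\varphi_{g_2}} \alpha_j$ in $F$, decidable by $\mathrm{TCP}(F)$ (condition (i)); iterating over the finitely many $j$ finishes the case.

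The hardest part will be the $h \neq 1_H$ case: a priori the conjugator $y$ varies over infinitely many elements of $G$, but the centralizer coset decomposition together with the observation that $g_2^n$ commutes with $g_2$ compresses the search to just $t$ twisted conjugacy instances. The $h = 1_H$ case is by contrast essentially a restatement of orbit decidability, and the two halves together produce a $\mathrm{CP}(G)$ algorithm exactly under the orbit-decidability hypothesis.
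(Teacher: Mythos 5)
This theorem is cited in the paper from \cite{bogopolski_orbit_2009} without proof, so there is no in-paper argument to compare against; what you have done is reconstruct the original Bogopolski--Martino--Ventura proof. Your reconstruction is essentially correct and follows the standard line of that paper. The forward direction is exactly right: for $u,v\in F$, $u\sim_G v$ iff some $\varphi_g\in A_G$ sends $u$ to an $F$-conjugate of $v$ (the witness $g$ factors as $gw$ with $w\in F$), so $\mathrm{CP}(G)$ restricted to $F\times F$ answers $\mathrm{OD}(A_G)$. For the reverse direction, the key reduction is handled correctly: after using (ii) and a computable section of $\beta$ to force $\beta(g_1)=\beta(g_2)=h$, the $h=1$ case is a single orbit-decidability query by the same equivalence, and the $h\neq 1$ case uses (iii) to write any conjugator as $f_0\tilde z_j g_2^n$ and then the commutation of $g_2^n$ with $g_2$ to eliminate the $n$-parameter, leaving finitely many twisted conjugacy instances $\alpha\sim_{\varphi_{g_2}}\alpha_j$ in $F$, each decidable by (i).

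Two small points worth tightening. First, the remark that ``$\mathrm{Inn}(F)\subseteq A_G$ so that $F$-conjugacy is absorbed into orbits'' is not an additional ingredient in the $h=1$ case; it is already built into the equivalence you proved in the forward direction (the conjugator $gw$ shows the $\sim_F$ part is absorbed), so stating it separately is slightly misleading. Second, hypothesis (i) must be read as the version of the twisted conjugacy problem in which the automorphism $\phi$ (here $\varphi_{g_2}$, computable from $g_2$) is part of the input, rather than the uniform problem quantified over all of $\mathrm{Aut}(F)$; this is what the argument actually invokes. Both are presentational, not substantive, and the overall argument is sound.
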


\begin{defi}
    Let $A \leq \mathrm{Aut}(G)$ for a group $G = \langle X \rangle$. The \emph{orbit decidability problem} for $A$, denoted OD(A), takes as input two words $u,v \in X^{\ast}$, and decides whether there exists $\phi \in A$ such that $v \sim \phi(u)$.  
\end{defi}
When $m$ is odd, we recall that $|\mathrm{Out}(G(m))| = 2$ by Theorem \ref{thm:odd autos}, so we can apply a similar technique as \cite[Theorem 5.1]{gonzalez-meneses_twisted_2014} to prove Theorem \ref{thm:orbit decid}.

\begin{proof}[Proof of Theorem \ref{thm:orbit decid}]
   Let $\psi_{1}, \dots \psi_{s} \in \mathrm{Aut}(G(m))$ be given, and consider $A = \langle \psi_{1}, \dots \psi_{s} \rangle \leq \mathrm{Aut}(G(m))$. For each $i = 1,\dots, s$, compute $g_{i} \in G(m)$ such that $\psi_{i} = \iota_{g_{i}}\phi_{i}$, where $\iota_{g_{i}} \in \mathrm{Inn}(G(m))$ and $\phi_{i} \in [\psi_{i}]$. Given two words $u,v \in X^{\ast}$, we want to decide whether $v \sim \psi(u)$ for some $\psi \in A$. 

If $[\psi_{i}]$ is trivial, for every $i$, then $A \leq \mathrm{Inn}(G(m))$, and so the set $\{\psi(u) \mid \psi \in A \}$ is a collection of conjugates of $u$. Here our problem reduces to deciding if $v$ is conjugate to $u$, which is decidable by \cite[Proposition 3.1]{Holt2015}. Otherwise, the set $\{\psi(u) \mid \psi \in A \}$ is a collection of conjugates of $u$ and $\phi(u)$, where $\phi \in \mathrm{Aut}(G(m))$ as in \cref{eqn: odd phi map}. We therefore need to decide whether $v$ is conjugate to either $u$ or $\phi(u)$, which is decidable by two applications of \cite[Proposition 3.1]{Holt2015}.
\end{proof}
Our final result is then immediate from Theorem \ref{thm:main result TCP solvable}, Theorem \ref{thm:orbit decid} and Theorem \ref{thm:orbit conditions}.

\extension 
\section*{Acknowledgements}
The author would like to thank Laura Ciobanu and Fujii Michihiko for helpful discussions. They would also like to thank Andrew Duncan and Alessandro Sisto for their comments, as well as the reviewer for suggestions. 

\bibliography{references}
\bibliographystyle{plain}

\appendix
\section{Geodesic normal forms}\label{appendix}
\cref{tab:geo forms odd} gives a summary of geodesic normal forms for $G(m)$, originally derived from \cite{Fujii2018}. We provide details on how to find geodesic representatives in $G(m)$ in linear time.

\begin{prop}\label{prop:linear time geos}
    Let $w \in X^{\ast}$ represent an element of $G(m)$. Then we can determine, in linear time, a geodesic representative $u = x^{a_{1}}y^{b_{1}}\dots y^{b_{\tau}}\Delta^{c}$, where $\Delta=_{G} x^2 =_{G} y^{2k+1}$, with conditions given in \cref{tab:geo forms odd}, such that $w =_{G} u$. 
\end{prop}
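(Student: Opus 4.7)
The plan is to describe an explicit algorithm that produces a normal form of the required shape in a bounded number of left-to-right passes, then check that each pass runs in linear time. The key structural fact is that $\Delta$ is central in $G(m)$ and equals both $x^{2}$ and $y^{m}$, so any excess exponent inside an $x$- or $y$-syllable can be exchanged for a power of $\Delta$ pushed to the right-hand end of the word. Because $\Delta$ is central, accumulated Garside powers never interfere with the alternating syllable structure already built up.

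First I would scan $w$ once using a stack, collapsing it into an alternating sequence of syllables $z_{1}^{c_{1}} z_{2}^{c_{2}} \cdots z_{n}^{c_{n}}$ with $z_{i} \in \{x,y\}$. Then, in a second linear pass over the syllables, I would write $c_{i} = 2q_{i} + r_{i}$ when $z_{i}=x$ (with $r_{i}\in\{-1,0,1\}$) and $c_{i} = m q_{i} + r_{i}$ when $z_{i}=y$ (with $r_{i}\in[-k,k]$), add $\sum q_{i}$ into a single global Garside counter $c$, and replace each syllable by $z_{i}^{r_{i}}$. Whenever $r_{i} = 0$ the syllable vanishes and its two same-letter neighbours must be merged; the new combined syllable is then re-processed in exactly the same way. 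A stack-based implementation handles these cascading merges with $O(1)$ amortised work per syllable, since each merge strictly decreases the stack size. After this phase the word has the form $u_{1}\Delta^{c}$ where $u_{1}$ strictly alternates between $x$- and $y$-syllables, every $x$-syllable has exponent in $\{-1,0,1\}$, and every $y$-syllable has exponent in $[-k,k]$.

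To convert $u_{1}\Delta^{c}$ into the exact normal form of \cref{tab:geo forms odd}, a final constant number of linear sweeps suffices. Each of the sign/range conditions that distinguishes Types (1), (2) and (3) is local to the first or last syllables (or compares them pairwise), so if a syllable violates the relevant condition it can be rewritten locally using identities such as $x^{-1} = x\Delta^{-1}$ or $y^{-b} = y^{m-b}\Delta^{-1}$, adjusting the counter $c$ accordingly. Each such rewrite strictly decreases a well-defined measure (the number of ``wrong-sign'' syllables, bounded by the word length), and the inequalities of \cref{prop:geods} ensure that geodesity is preserved throughout; so the cleanup phase also terminates in $O(|w|)$ time.

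The main obstacle is the amortised analysis of the cascading merges in the Garside-extraction step: a single collapsed syllable can force several neighbours to combine, so a naive pass-based bound would be quadratic. The argument to be made rigorous is a standard stack-potential one — each input letter is pushed at most once, and each merge removes a syllable from the stack — giving a total syllable-level work bounded linearly in $|w|$. Combining the parsing pass, the Garside-extraction with merging, and the constant number of cleanup sweeps yields the claimed $O(|w|)$ overall bound.
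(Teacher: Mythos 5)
Your overall three-pass plan (parse into alternating syllables with a stack; extract central Garside power; final cleanup) matches the shape of the paper's argument, and the stack-potential analysis of the cascading merges is fine. The gap is in the cleanup phase, and it is a real one.

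After your second pass you produce $a_i \in \{-1,0,1\}$ and $b_i \in [-k,k]$ with an arbitrary global $c$. But the geodesic forms in \cref{tab:geo forms odd} are \emph{not} obtained by picking a centred residue for every syllable: which residue class representative is geodesic depends on the sign of $c$ and, crucially, on how $|c|$ compares to the \emph{total} number of syllables that are ``adjustable'' (the $x$-syllables with $a_i = 1$, and the $y$-syllables with $b_i \in \{k,k+1\}$). For instance $y^{k+1}$ is geodesic of Type $(3^{+})$ with $b_1 = k+1$ and $c = 0$, but your Phase~2 rewrites it as $y^{-k}\Delta$, which is longer; conversely with $c>0$ and an $a_i=-1$ syllable, rewriting $x^{-1}\Delta \to x$ strictly shortens the word. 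So your Phase~2 output can fail to be geodesic, and the cleanup must do the real work. Your description of that work contains two incorrect claims. First, the sign/range conditions distinguishing Types (1), (2), $(3^{\pm})$, etc.\ are \emph{global}: every $a_i$ must have the same sign pattern and every $b_i$ must lie in the correct (shifted) window, not just the first and last syllables. Second, ``number of wrong-sign syllables'' is not a well-defined termination measure: repairing a syllable via $x^{-1} = x\Delta^{-1}$ or $y^{b} = y^{b-m}\Delta$ changes $c$, and changing $c$'s sign flips which window the \emph{other} syllables should lie in, so your potential can go back up. The paper avoids this by first normalising to a one-sided window ($0 \le a_i \le 1$, $-(k-1) \le b_i \le k+1$), then explicitly comparing $|c|$ against the count $r_{w_2}$ of adjustable syllables, and only then choosing in one global step which residue window to use for the whole word; your cleanup sketch skips exactly that comparison. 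Without it the claim that a constant number of local sweeps suffices, and that geodesity is preserved throughout, is unsupported.
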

\begin{proof}
    Our first step is to rewrite $w \in X^{\ast}$ into a Garside normal form. This gives us a representative $w_{1} =_{G} w$, where
    \[ w_{1} = x^{a_{1}}y^{b_{1}}\dots y^{b_{\tau}}\Delta^{c},
    \]
    such that $\tau \in \Z_{>0}$, $c \in \Z$, and
    \[
    \begin{cases}
        0 \leq a_{i} \leq 1 \; (1 \leq i \leq \tau), & a_{i} \neq 0 \; (2 \leq i \leq \tau),\\
        0 \leq b_{i} \leq 2k \; (1 \leq i \leq \tau), & b_{i} \neq 0 \; (1 \leq i \leq \tau - 1).
    \end{cases}
    \]
    This is achieved by reading $w$ once, and either applying free cancellation to each factor, or moving any $x^{\pm 2}$ or $y^{\pm (2k+1)}$ terms to the Garside element. Then, we read the word again, and for every $x^{a_{i}}$ and $y^{b_{i}}$ term, such that $a_{i}, b_{i} <0$, we replace these terms by $x^{a_{i}+2}\Delta^{-1}$ and $y^{b_{i}+2k+1}\Delta^{-1}$ respectively, and move all $\Delta^{-1}$ terms to the Garside element. 

    Next, we rewrite $w_{1}$ into a modified Garside normal form. This gives us a representative $w_{2} =_{G} w$, where
    \[ w_{2} = x^{a_{1}}y^{b_{1}}\dots y^{b_{\tau}}\Delta^{c},
    \]
    such that $\tau \in \Z_{>0}$, $c \in \Z$, and
    \[
    \begin{cases}
        0 \leq a_{i} \leq 1 \; (1 \leq i \leq \tau), & a_{i} \neq 0 \; (2 \leq i \leq \tau),\\
        -(k-1) \leq b_{i} \leq k+1 \; (1 \leq i \leq \tau), & b_{i} \neq 0 \; (1 \leq i \leq \tau - 1).
    \end{cases}
    \]
    This is achieved by reading $w_{1}$, and for every $y^{b_{i}}$ term such that $b_{i} \geq k+2$, we replace each $y^{b_{i}}$ term with $y^{b_{i}-(2k+1)}\Delta$, and move all $\Delta$ terms to the Garside element. If $c \geq 0$, then $w_{2}$ is a geodesic of Type $(1)$,  $(3^{+})$ or $(3^{+} \cap 3^{-})$. For the remaining cases, we define the following quantities based on our modified normal form $w_{2}$. Let
    \[ \begin{cases}
        R_{a} := \{i \mid a_{i} > 0 \}, & r_{a} := |R_{a}|, \\
        R_{b} := \{j \mid k \leq b_{j} \leq k+1 \}, & r_{b} := |R_{b}|, \\
        r_{w_{2}} := r_{a} + r_{b}. & 
    \end{cases}
    \]
    If $c < 0$ and $r_{w_{2}} \leq -c$, then replace all $x^{a_{i}}$, such that $i \in R_{a}$, by $x^{a_{i}-2}\Delta$, and similarly replace all $y^{b_{j}}$, such that $j \in R_{b}$, by $y^{b_{j}-(2k+1)}\Delta$, and move all $\Delta$ terms to the Garside. This gives us a geodesic of Type (2) or $(3^{-})$. If $c \leq 0$ and $r_{w_{2}} > -c$, then first rewrite all terms as before from $R_{a}$ and $R_{b}$, until the Garside element is empty. This leaves us with a word $w_{3} =_{G} w$, where
    \[ w_{3} = x^{a_{1}}y^{b_{1}}\dots y^{b_{\tau}},
    \]
    such that
    \[
    \begin{cases}
        -1 \leq a_{i} \leq 1 \; (1 \leq i \leq \tau), & a_{i} \neq 0 \; (2 \leq i \leq \tau),\\
        -(k+1) \leq b_{i} \leq k+1 \; (1 \leq i \leq \tau), & b_{i} \neq 0 \; (1 \leq i \leq \tau - 1).
    \end{cases}
    \]
    If $w_{3} \in (3^{0+}U) \cup (3^{0-}U) \cup (3^{0+}N) \cup (3^{0-}N) \cup (3^{0*})$ then we are done. Otherwise, we read $w_{3}$ and determine the following values:
    \begin{alignat*}{2}
        \alpha^{+} &:= \# x \; \text{terms in} \; w_{3}, \quad &&\alpha^{-} := \# x^{-1} \; \text{terms in} \; w_{3}, \\
        \beta^{+} &:= \# y^{k+1} \; \text{terms in} \; w_{3}, \quad &&\beta^{-} := \# y^{-(k+1)} \; \text{terms in} \; w_{3}.
    \end{alignat*}
    We note that for $\epsilon \in \{ \pm 1 \}$, then any pairs $\left(y^{\epsilon(k+1)}, y^{-\epsilon(k+1)}\right)$ can be rewritten as $\left(y^{-\epsilon k}, y^{\epsilon k}\right)$, using the rewrite rule
    \[ sy^{\epsilon (k+1)}ty^{-\epsilon (k+1)}z =_{G} sy^{\epsilon(2k+1)}y^{-\epsilon k}ty^{-\epsilon(2k+1)}y^{\epsilon k}z =_{G} sy^{-\epsilon k}ty^{\epsilon k}z,
    \]
    where $s,t,z \in X^{\ast}$. After rewriting all possible pairs of this form in $w_{3}$, we have a word where either $\beta^{+} = 0$ or $\beta^{-} = 0$. If $\beta^{+} = \beta^{-} = 0$, then our word is of Type $(3^{0+}U)$, $(3^{0-}U)$ or $(3^{0*})$.  

    Suppose $\beta^{+} = 0$ and $\beta^{-} > 0$. If $\alpha^{+} = 0$, then $w_{3}$ is of Type $(3^{0-}N)$. Otherwise, if $\alpha^{+} > 0$, then rewrite all pairs $\left(x, y^{-(k+1)}\right)$ to $(x^{-1}, y^{k})$ as far as possible, using the rewrite rule defined in \cref{eqn:x y pair rewrite}. This leaves us with a word of Type $(3^{0-}N)$ or $(3^{0*})$. Now suppose $\beta^{+}>0$ and $\beta^{-} = 0$. If $\alpha^{-} = 0$, then $w_{3}$ is of Type $(3^{0+}N)$. Otherwise, if $\alpha^{-} > 0$, then rewrite all pairs $(x^{-1}, y^{k+1})$ to $(x, y^{-k})$ as far as possible, again using \cref{eqn:x y pair rewrite}, which leaves us with a word of Type $(3^{0+}N)$ or $(3^{0*})$. 

    This covers all cases, and we obtain a geodesic representative in linear time.
\end{proof}
\begin{table}[ht!]
\begin{adjustbox}{width=1\textwidth}
    \begin{tabular}{|c|c|c|}
        \hline
         Type & Conditions on $w = x^{a_{1}}y^{b_{1}}\dots x^{a_{\tau}}y^{b_{\tau}}\Delta^{c}$, where $\tau \in \Z_{>0}$ & Unique/Non-unique \\
        \hline \hline
         $(1)$ & $\begin{array}{cc}
        c>0, &\\
        0 \leq a_{i} \leq 1 \; (1 \leq i \leq \tau), & a_{i} \neq 0 \; (2 \leq i \leq \tau),\\
        -(k-1) \leq b_{i} \leq k+1 \; (1 \leq i \leq \tau), & b_{i} \neq 0 \; (1 \leq i \leq \tau - 1).
        \end{array}$ & \multirow{7}{*}{Unique}\\ \cline{1-2}
        $(2)$ & $\begin{array}{cc}
            c<0, & \\
            -1 \leq a_{i} \leq 0 \; (1 \leq i \leq \tau), & a_{i} \neq 0 \; (2 \leq i \leq \tau), \\
            -(k+1) \leq b_{i} \leq k-1 \; (1 \leq i \leq \tau), & b_{i} \neq 0 \; (1 \leq i \leq \tau - 1).
        \end{array}$ & \\\cline{1-2}
        $(3^{+})$ & $\begin{array}{cc}
        c=0, & \\
        0 \leq a_{i} \leq 1 \; (1 \leq i \leq \tau), & a_{i} \neq 0 \; (2 \leq i \leq \tau),\\
        -(k-1) \leq b_{i} \leq k+1 \; (1 \leq i \leq \tau), & b_{i} \neq 0 \; (1 \leq i \leq \tau - 1).
    \end{array}$ & \\\cline{1-2}
        $(3^{-})$ & $\begin{array}{cc}
        c=0, & \\
        -1 \leq a_{i} \leq 0 \; (1 \leq i \leq \tau), & a_{i} \neq 0 \; (2 \leq i \leq \tau), \\
            -(k+1) \leq b_{i} \leq k-1 \; (1 \leq i \leq \tau), & b_{i} \neq 0 \; (1 \leq i \leq \tau - 1).
    \end{array}$ & \\\cline{1-2}
         $(3^{+}\cap 3^{-})$ & $w = y^{b}$ where $-(k-1) \leq b \leq k-1$.  & \\\cline{1-2}
        $(3^{0+}U)$ & $\begin{array}{cc}
        c = 0 & \\
        0 \leq a_{i} \leq 1 \; (1 \leq i \leq \tau), & a_{i} \neq 0 \; (2 \leq i \leq \tau),\\ 
        -k \leq b_{i} \leq k \; (1 \leq i \leq \tau), & b_{i} \neq 0 \; (1 \leq i \leq \tau -1), \\ \text{There exist at least one} \; y^{-k} \; \text{term}. & \\
    \end{array}$&  \\\cline{1-2}
         $(3^{0-}U)$ & $\begin{array}{cc}
        c = 0 & \\
        -1 \leq a_{i} \leq 0 \; (1 \leq i \leq \tau), & a_{i} \neq 0 \; (2 \leq i \leq \tau),\\ 
        -k \leq b_{i} \leq k \; (1 \leq i \leq \tau), & b_{i} \neq 0 \; (1 \leq i \leq \tau -1), \\ \text{There exist at least one} \; y^{k} \; \text{term}. & \\
    \end{array}$ & \\
        \hline
         $(3^{0+}N)$ & $\begin{array}{cc}
        c = 0 & \\
        0 \leq a_{i} \leq 1 \; (1 \leq i \leq \tau), & a_{i} \neq 0 \; (2 \leq i \leq \tau),\\ 
        -k \leq b_{i} \leq k+1 \; (1 \leq i \leq \tau), & b_{i} \neq 0 \; (1 \leq i \leq \tau -1), \\ 
        \text{There exist both} \; y^{-k} \; \text{and} \; y^{k+1} \; \text{terms}. & \\
    \end{array}$ & \multirow{3}{*}{Non-unique} \\\cline{1-2}
        $(3^{0-}N)$ & $\begin{array}{cc}
        c = 0 & \\
        -1 \leq a_{i} \leq 0 \; (1 \leq i \leq \tau), & a_{i} \neq 0 \; (2 \leq i \leq \tau),\\ 
        -(k+1) \leq b_{i} \leq k \; (1 \leq i \leq \tau), & b_{i} \neq 0 \; (1 \leq i \leq \tau -1), \\ \text{There exist both} \; y^{k} \; \text{and} \; y^{-(k+1)} \; \text{terms}. &
    \end{array}$ & \\\cline{1-2}
         $(3^{0*})$ & $\begin{array}{cc}
        c = 0 & \\
        -1 \leq a_{i} \leq 1 \; (1 \leq i \leq \tau), & a_{i} \neq 0 \; (2 \leq i \leq \tau), \\ 
        -k \leq b_{i} \leq k \; (1 \leq i \leq \tau), & b_{i} \neq 0 \; (1 \leq i \leq \tau -1), \\
        \text{There exist both} \; x, x^{-1} \; \text{terms.} &
    \end{array}$ &   \\
    \hline
    \end{tabular}
    \end{adjustbox}
    \caption{Geodesic normal forms}
    \label{tab:geo forms odd}
\end{table}

\comm{
\uppercase{\footnotesize{Department of Mathematics, University of Manchester M13 9PL, UK and the Heilbronn Institute for Mathematical Research, Bristol, UK}}
\par 
\textit{Email address:} \texttt{gemma.crowe@manchester.ac.uk}}

\end{document}